\index{\footnote{}}
\documentclass[11pt]{article}
\usepackage{amssymb,amsmath,amsfonts,amsthm}

\usepackage{epsfig}

\topmargin -.4in
\textheight 8.5in
\textwidth 6in
\oddsidemargin 0.3in
\evensidemargin 0.0in

\numberwithin{equation}{section}

\newcommand{\di}{\displaystyle}

\newcommand{\R}{{\mathbb R}}
\newcommand{\intr}{\int\limits_{\R^N}}
\newcommand{\ia}{I_\alpha}


\theoremstyle{plain}

\newtheorem{cor}{Corollary}

\newtheorem{theorem}{Theorem}[section]
\newtheorem{lemma}{Lemma}[section]

\theoremstyle{definition}

\begin{document}

\title{Nonlocal Pertubations of Fractional Choquard Equation}

\author{Gurpreet Singh\footnote{School of Mathematics and Statistics,
    University College Dublin, Belfield, Dublin 4, Ireland; {\tt gurpreet.singh@ucdconnect.ie}} 
}

\maketitle

\begin{abstract}
We study the equation
\begin{equation*}
(-\Delta)^{s}u+V(x)u= (I_{\alpha}*|u|^{p})|u|^{p-2}u+\lambda(I_{\beta}*|u|^{q})|u|^{q-2}u \quad\mbox{ in } \R^{N},
\end{equation*}
where $I_\gamma(x)=|x|^{-\gamma}$ for any $\gamma\in (0,N)$, $p, q >0$, $\alpha,\beta\in (0,N)$, $N\geq 3$ and $\lambda \in \R$. First, the existence of a groundstate solutions using minimization method on the associated Nehari manifold is obtained. Next,  the existence of least energy sign-changing solutions is investigated by considering the Nehari nodal set.

\medskip

\noindent {\it Keywords}: Choquard equation; fractional operator; nonlocal perturbation; groundstate solution; sign-changing solutions

\noindent{MSC 2010:} 35R11; 35J60; 35Q40; 35A15

\end{abstract}

\section{Introduction}
In this paper we are concerned with the equation
\begin{equation}\label{fr}
(-\Delta)^{s}u+V(x)u= (I_{\alpha}*|u|^{p})|u|^{p-2}u+\lambda(I_{\beta}*|u|^{q})|u|^{q-2}u \quad\mbox{ in } \R^{N},
\end{equation}
where $p, q >0$, $\alpha,\beta\in (0,N)$, $N\geq 3$. Here  $I_\gamma$ stands for the Riesz potential of order $\gamma$ defined as $I_\gamma=|x|^{\gamma-N}$ for any $\gamma\in (0,N)$. 

The operator $(-\Delta)^s$ is the fractional Laplace operator of order $s\in (0,1)$ and is referred to the infinitesimal generator of Levy stable diffusion process and is defined as follows (see \cite{NPV2012, MRS2016}):
$$
(-\Delta)^s u=C(N,s)\, P.V. \int_{\R^N}\frac{u(x)-u(y)}{|x-y|^{N+2s}}dy,
$$
where $P.V.$ stands for the principal value of the integral and $C(N,s)>0$ is a normalizing constant.
The function $V \in C(\R^{N})$ is required to satisfy one (or both) of the following conditions:

\begin{enumerate}
\item[(V1)] $\inf_{\R^{N}}V(x)\geq V_{0}> 0$ ;
\item[(V2)] For all $M>0$ the set $\{x\in \R^N: V(x)\leq M\}$ has finite Lebesgue measure. 
\end{enumerate}

Note that condition $(V2)$ is weaker than $\lim_{|x|\to \infty}V(x)=\infty$ as for instance $V(x)=|x|^4 \sin^2 |x|$ satisfies $(V2)$ but has no limit as $|x|\to \infty$.
 
In the last few decades, problems involving fractional Laplacian and non-local operators have received considerable attention. These kind of problems arise in various applications, such as continuum mechanics, phase transitions, population dynamics, optimization, finance and many others. 

The prototype model of \eqref{fr} is the fractional Choquard equation 
\begin{equation}\label{squ}
(-\Delta)^{s}u+V(x)u= (I_{\alpha}*|u|^{p})|u|^{p-2}u \quad\mbox{ in } \R^{N},
\end{equation}
studied by Avenia, Siciliano and Squassina in \cite{DSS2015} in case where $V$ is a positive constant. The authors in \cite{DSS2015} obtained the existence of  groundstate and radially symmetric solutions with diverging norm and diverging energy levels.

The case of standard Laplace operator in \eqref{squ} has a long history in the literature.
For $s=1$, $V\equiv 1$, $p=\alpha= 2$, the equation \eqref{squ} becomes the well known Choquard or nonlinear Schr\"odinger-Newton equation 
\begin{equation}\label{fr1a}
-\Delta u+ u= (I_{2}*u^{2})u \quad\mbox{ in } \R^{N}.
\end{equation}
The equation \eqref{fr1a} for $N=3$ was first introduced by S.I. Pekar in 1954 in quantum mechanics. In 1996, R. Penrose \cite{P1996, P1998} used equation \eqref{fr1a} in a different context as a model in self-gravitating matter (see  also \cite{J1995, MPT1998}). Since then, the  Choquard equation has been investigated in various settings and in many contexts (see, e.g., \cite{AFY2016, GT2016, MZ2010}).  For a most up to date reference on the study of Choquard equation in a standard Laplace setting the reader may consult \cite{MV2017}.  

For $s= 1/2$, $V\equiv 1$, $p=\alpha= 2$, $N=3$ and $\lambda= 0$, the equation \eqref{fr} becomes
\begin{equation}\label{fr1b}
(-\Delta)^{\frac{1}{2}} u+ u= (I_{2}*u^{2})u \quad\mbox{ in } \R^{3},
\end{equation}
and has been used to study the dynamics of pseudo-relativistic boson stars and their dynamical evolution (see \cite{ES2007, FJL2007, FJL2007a, L2009}).

In this paper we shall be interested in the study of groundstate solutions and least energy sign-changing solutions to \eqref{fr}. To this aim, we denote by ${\mathcal D}^{2,s}(\R^{N})$ the completion of $C^{\infty}_c(\R^{N})$ with respect to the Gagliardo seminorm
$$
[u]_{s,2}= \Big[\int_{{\mathbb R}^N}\!\!\!\int_{{\mathbb R}^N}\frac{|u(x)-u(y)|^{2}}{|x-y|^{N+2s}}dxdy\Big]^{\frac{1}{2}}.
$$
Also, $H^{s}(\R^{N})$ denotes the standard fractional Sobolev space defined as the set of $u \in {\mathcal D}^{2,s}(\R^{N})$ satisfying $u \in L^{2}(\R^{N})$ with the norm
$$
\|u\|_{H^{s}}= \Big[ \int_{\R^{N}}|u|^{2}+ [u]_{s,2}^{2}\Big]^{\frac{1}{2}}.
$$
Let us define the functional space
$$
X_{V}^{s}(\R^{N})= \Big\{u\in {\mathcal D}^{2,s}(\R^{N}): \int_{\R^{N}}V(x)u^2< \infty \Big\},
$$
endowed with the norm
$$
\|u\|_{X_{V}^{s}}=\Big[\int_{{\mathbb R}^N}\!\!\!\int_{{\mathbb R}^N}\frac{|u(x)-u(y)|^{2}}{|x-y|^{N+2s}}dxdy+\int_{\R^{N}}V(x)u^2 \Big]^{\frac{1}{2}}.
$$

Throughout this paper we shall assume that $p$ and $q$ satisfy
\begin{equation}\label{p}
\frac{N+\alpha}{N}< p< \frac{N+\alpha}{N-2s},
\end{equation}
and
\begin{equation}\label{q}
\frac{N+\beta}{N}< q< \frac{N+\beta}{N-2s}.
\end{equation}
It not difficult to see that \eqref{fr} has a variational structure. Indeed,  any solution of \eqref{fr} is a critical point of the energy functional 
${\mathcal E}_\lambda:X_{V}^{s}(\R^{N}) \rightarrow \R$ defined by
\begin{equation}\label{fr1}
\begin{aligned}
{\mathcal E}_\lambda(u)&=\frac{1}{2}\|u\|_{X_{V}^{s}}^{2}-\frac{1}{2p}\int_{\R^{N}}(I_{\alpha}*|u|^{p})|u|^{p}-\frac{\lambda}{2q}\int_{\R^{N}}(I_{\beta}*|u|^{q})|u|^{q}.
\end{aligned}
\end{equation}

A crucial tool to our approach is the following Hardy-Littlewood-Sobolev inequality
\begin{equation}\label{hli}
\Big| \int_{\R^N}(I_\gamma*u)v \Big| \leq C\|u\|_r \|v\|_t,	
\end{equation}
for $\gamma \in (0, N)$ and $u\in L^{r}(\R^N)$, $v\in L^{t}(\R^N)$ such that
$$
\frac{1}{r}+\frac{1}{t}=1+\frac{\gamma}{N}.
$$
Using \eqref{p}, \eqref{q} together with the Hardy-Littlewood-Sobolev inequality \eqref{hli}, the energy functional ${\mathcal E}_\lambda$ is well defined and  moreover ${\mathcal E}_\lambda\in C^1(X_V^s)$.

We shall first be concerned with the existence of ground state solutions for the equation \eqref{fr} under the assumption that $V$ satisfies $(V1)$. This will be achieved by minimisation method on the Nehari manifold associated with ${\mathcal E}_\lambda$, which is defined as 
\begin{equation}\label{nm}
{\mathcal N_\lambda}=\{u\in X_{V}^{s}(\R^N)\setminus\{0\}: \langle {\mathcal E}_\lambda'(u),u\rangle=0\}.
\end{equation}

The groundstate solutions will be obtained as minimizers of
$$
m_{\lambda}=\inf_{u\in {\mathcal N_\lambda}}{\mathcal E}_\lambda(u).
$$
Our main result in this sense is stated below.
\begin{theorem}\label{gstate}
Assume $p>q> 1$,  $\lambda> 0$, $p,q$ satisfy \eqref{p}-\eqref{q} and $V$ satisfies $(V1)$. Then the equation \eqref{fr} has a ground state solution $u\in X_{V}^{s}(\R^{N})$. 
\end{theorem}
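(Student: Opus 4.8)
The plan is to realize the ground state as a minimizer of $\mathcal{E}_\lambda$ constrained to the Nehari manifold $\mathcal{N}_\lambda$, and then to show that such a constrained minimizer is in fact a free critical point of $\mathcal{E}_\lambda$, hence a weak solution of \eqref{fr}. First I would analyse the fibering maps $t\mapsto \mathcal{E}_\lambda(tu)$ for fixed $u\in X_V^s(\R^N)\setminus\{0\}$. Writing $A(u)=\int_{\R^N}(I_\alpha*|u|^p)|u|^p$ and $B(u)=\int_{\R^N}(I_\beta*|u|^q)|u|^q$, both finite and positive by \eqref{p}, \eqref{q} and the Hardy--Littlewood--Sobolev inequality \eqref{hli}, one has
$$
\mathcal{E}_\lambda(tu)=\frac{t^2}{2}\|u\|_{X_V^s}^2-\frac{t^{2p}}{2p}A(u)-\frac{\lambda t^{2q}}{2q}B(u).
$$
Since $p>q>1$ and $\lambda>0$, the critical point equation $\|u\|_{X_V^s}^2=t^{2p-2}A(u)+\lambda t^{2q-2}B(u)$ has a unique root $t(u)>0$, because the right-hand side increases strictly from $0$ to $+\infty$. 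Thus each ray meets $\mathcal{N}_\lambda$ exactly once, at the unique maximum of the fibering map. On $\mathcal{N}_\lambda$ the constraint $\|u\|_{X_V^s}^2=A(u)+\lambda B(u)$ yields
$$
\mathcal{E}_\lambda(u)=\Big(\tfrac12-\tfrac1{2p}\Big)A(u)+\lambda\Big(\tfrac12-\tfrac1{2q}\Big)B(u)\geq\Big(\tfrac12-\tfrac1{2q}\Big)\|u\|_{X_V^s}^2,
$$
so $\mathcal{E}_\lambda$ is nonnegative there.

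Next I would show $m_\lambda>0$ and that minimizing sequences are bounded. Combining \eqref{hli} with the continuous embedding $X_V^s(\R^N)\hookrightarrow H^s(\R^N)\hookrightarrow L^r(\R^N)$ (available under $(V1)$, since $\inf V\geq V_0>0$) gives $A(u)\leq C\|u\|_{X_V^s}^{2p}$ and $B(u)\leq C\|u\|_{X_V^s}^{2q}$. On $\mathcal{N}_\lambda$ this forces $\|u\|_{X_V^s}^2\leq C_1\|u\|_{X_V^s}^{2p}+C_2\|u\|_{X_V^s}^{2q}$ with $p,q>1$, whence $\inf_{\mathcal{N}_\lambda}\|u\|_{X_V^s}\geq\delta>0$ and, by the lower bound above, $m_\lambda\geq(\tfrac12-\tfrac1{2q})\delta^2>0$. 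The same bound makes any minimizing sequence $(u_n)\subset\mathcal{N}_\lambda$ bounded in $X_V^s$, so along a subsequence $u_n\rightharpoonup u$.

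The main obstacle is compactness: under $(V1)$ alone the embedding $X_V^s\hookrightarrow L^r$ is continuous but not compact, so a priori $u$ could vanish and the nonlocal terms need not converge. I would resolve this with a Brezis--Lieb type decomposition for the Choquard nonlinearities $A$ and $B$, coupled with a concentration-compactness analysis of $(u_n)$. Vanishing is excluded because the Nehari identity keeps $A(u_n)+\lambda B(u_n)=\|u_n\|_{X_V^s}^2\geq\delta^2$ bounded away from zero, so a nontrivial amount of mass survives in the limit; dichotomy is excluded since splitting the sequence into two asymptotically separated nontrivial pieces and projecting each onto $\mathcal{N}_\lambda$ would force the energy to be at least $2m_\lambda$, contradicting $\mathcal{E}_\lambda(u_n)\to m_\lambda$. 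This is the delicate step, and it is precisely where the structural hypotheses $p>q>1$, $\lambda>0$ (which guarantee $m_\lambda>0$ and the strict energy gain under splitting) are genuinely used.

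Once a nontrivial weak limit $u$ is recovered, weak lower semicontinuity of $\|\cdot\|_{X_V^s}^2$ together with the convergence of the nonlocal terms gives $u\in\mathcal{N}_\lambda$ with $\mathcal{E}_\lambda(u)\leq m_\lambda$, so $u$ is a minimizer. To promote $u$ to a solution of \eqref{fr}, I would verify that $\mathcal{N}_\lambda$ is a $C^1$-manifold near $u$: setting $\Phi(v)=\langle\mathcal{E}_\lambda'(v),v\rangle$, homogeneity gives
$$
\langle\Phi'(u),u\rangle=2\|u\|_{X_V^s}^2-2p\,A(u)-2q\lambda\,B(u)=2(1-p)A(u)+2\lambda(1-q)B(u)<0,
$$
so in the Lagrange relation $\mathcal{E}_\lambda'(u)=\mu\,\Phi'(u)$ the multiplier $\mu$ must vanish, yielding $\mathcal{E}_\lambda'(u)=0$. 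Hence $u$ is a ground state solution of \eqref{fr}.
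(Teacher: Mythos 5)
Your Nehari-manifold framework is sound and matches the paper's: the fibering-map analysis, the bounds $\|u\|_{X_V^s}\geq\delta>0$ and $m_\lambda\geq(\tfrac12-\tfrac1{2q})\delta^2>0$ on $\mathcal{N}_\lambda$, and the Lagrange-multiplier computation $\langle\Phi'(u),u\rangle<0$ promoting a constrained minimizer to a free critical point all appear (in equivalent form) in Lemma \ref{nehari} and the discussion preceding it. The genuine gap is the compactness step, which is where the entire difficulty of the theorem sits. Ruling out ``vanishing'' and ``dichotomy'' does \emph{not} yield a nontrivial weak limit: the remaining alternative in concentration-compactness is compactness \emph{up to translations}, and since $V$ satisfies only $(V1)$ (no compact embedding, no periodicity, no limit at infinity), the concentration points may escape to infinity. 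A minimizing sequence of the form $u_n\approx w(\cdot-y_n)$ with $|y_n|\to\infty$ exhibits neither vanishing nor dichotomy, yet $u_n\rightharpoonup 0$, so your conclusion that ``a nontrivial amount of mass survives in the limit'' does not follow. Moreover, your dichotomy threshold $2m_\lambda$ is not the relevant one: a piece of mass escaping to infinity is governed by a limit problem, not by $\mathcal{E}_\lambda$ on $\mathcal{N}_\lambda$, so its energy cost is the groundstate level of that limit problem rather than $m_\lambda$.

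The paper closes exactly this gap with two ingredients absent from your proposal. First, a splitting lemma (Lemma \ref{compact}), built on the nonlocal Brezis--Lieb results of Section 2 and applied to a Palais--Smale sequence produced by Ekeland's principle (not to a bare minimizing sequence): either the sequence converges strongly, or it differs from its weak limit by finitely many translated bubbles, each a nontrivial solution of \eqref{squ}, so each bubble costs at least $m_{\mathcal J}=\inf_{\mathcal{N}_{\mathcal J}}\mathcal{J}$ (Corollary \ref{corr1}). Second, and crucially, the strict inequality $m_\lambda<m_{\mathcal J}$ (Lemma \ref{flg}): one takes the groundstate $Q$ of \eqref{squ}, projects it onto $\mathcal{N}_\lambda$, observes that $\lambda>0$ and $p>q>1$ force the projection parameter $t(Q)<1$, and computes $\mathcal{E}_\lambda(tQ)<\mathcal{J}(Q)=m_{\mathcal J}$. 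This single-bubble energy comparison is the actual mathematical content behind your assertion that ``$\lambda>0$ guarantees the strict energy gain under splitting,'' but you never introduce the limit functional $\mathcal{J}$, its level $m_{\mathcal J}$, or prove any comparison of levels; without it the escaping-concentration scenario cannot be excluded and the proof does not close. A secondary point of the same nature: your final step, where weak lower semicontinuity plus ``convergence of the nonlocal terms'' is said to give $u\in\mathcal{N}_\lambda$ and $\mathcal{E}_\lambda(u)\leq m_\lambda$, presupposes precisely the compactness in question; the paper avoids this circularity because its weak limit is automatically a critical point (via Lemma \ref{anbl} applied to the Palais--Smale sequence), and the threshold $m_\lambda<m_{\mathcal J}$ is what upgrades weak to strong convergence.
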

Our approach relies on the analysis of the Palais-Smale sequences for ${\mathcal E}_\lambda \!\mid_{\mathcal N_\lambda}$. Using an idea from \cite{CM2016,CV2010} we show that any Palais-Smale sequence of ${\mathcal E}_\lambda \!\mid_{\mathcal N_\lambda}$ is either converging strongly to its weak limit or differs from it by a finite number of sequences which further are the translated solutions of \eqref{squ}. The novelty of our approach is that we shall rely on several nonlocal Brezis-Lieb results as we present in Section $3.2$.

\medskip

We now turn to the study of least energy sign-changing solutions of \eqref{fr}. In this setting we require $V$ to fulfill both $(V1)$ and $(V2)$. By the result in \cite[Lemma 2.1]{T2017} (see also \cite{PXZ2015,PXZ2016}) the embedding $X_{V}^{s}(\R^{N}) \hookrightarrow L^{q}(\R^{N})$ is compact for $q \in [2, 2_{s}^{*})$ where $2_{s}^{*}= \frac{2N}{N-2s}$. 

Our approach in the study of least energy sign-changing solutions of \eqref{fr} is based 
on the minimisation method on the Nehari nodal set defined as 
$$
{\cal M_\lambda}= \Big\{u\in  X_{V}^{s}(\R^{N}): u^{\pm} \neq 0 \mbox{ and } \langle {\mathcal E}_\lambda'(u),u^{\pm}\rangle  \mbox{ = } 0 \Big\}.
$$
The solutions will be obtained as a minimizers for 
$$
c_\lambda= \inf_{u\in {\cal M}_{\lambda}}{\mathcal E}_\lambda(u).
$$
In this situation the problem is more delicate as some of the usual properties of the local nonlinear functional doesn't work. For instance, since
\begin{equation*}
\begin{aligned}
\langle {\mathcal E}_\lambda'(u),u^{\pm} \rangle = &\|u^{\pm}\|_{X_{V}^{s}}^{2}-\int_{\R^{N}}(I_{\alpha}*(u^{\pm})^{p})(u^{\pm})^{p}-\lambda\int_{\R^{N}}(I_{\beta}*(u^{\pm})^{q})(u^{\pm})^{q}\\
&- \int_{{\mathbb R}^N}\!\!\!\int_{{\mathbb R}^N}\frac{u^{\pm}(x)u^{\mp}(y)+u^{\mp}(x)u^{\pm}(y)}{|x-y|^{N+2s}}dxdy
- \int_{\R^{N}}(I_{\alpha}*(u^{\pm})^{p})(u^{\mp})^{p}\\
&-\lambda\int_{\R^{N}}(I_{\beta}*(u^{\pm})^{q})(u^{\mp})^{q},
\end{aligned}
\end{equation*}
we have in general that  
\begin{equation*}
{\mathcal E}_\lambda(u) \neq 	{\mathcal E}_\lambda(u^+)+{\mathcal E}_\lambda(u^-) \quad\mbox{ and } \quad\langle {\mathcal E}^{'}_{\lambda}(u), u^{\pm} \rangle \neq  \langle {\mathcal E}^{'}_{\lambda}(u^{\pm}), u^{\pm} \rangle.
\end{equation*}
Therefore, the standard local methods used to investigate the existence of sign-changing solutions do not apply immediately to our nonlocal setting. 

Our second main result in this regard is stated below.
\begin{theorem}\label{signc}
Assume $\lambda \in \R$, $(N-4s)_{+}< \alpha, \beta< N$, $p>q> 2$ satisfy \eqref{p} and \eqref{q} and $V$ satisfies $(V1)$ and $(V2)$. Then the equation \eqref{fr} has a least-energy sign-changing solution $u\in X_{V}^{s}(\R^{N})$.
\end{theorem}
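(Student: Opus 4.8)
The plan is to realize the sign-changing solution as a minimizer of ${\mathcal E}_\lambda$ over the Nehari nodal set ${\mathcal M}_\lambda$, adapting the Nehari-manifold method to the fractional and nonlocal features. Writing $u=u^{+}+u^{-}$ with $u^{+}\geq 0\geq u^{-}$, the essential new feature is that neither the quadratic form nor the nonlocal terms split across $u^{\pm}$: the Gagliardo seminorm contributes a cross term $B(u):=-\int_{\R^N}\!\int_{\R^N}\frac{u^{+}(x)u^{-}(y)+u^{-}(x)u^{+}(y)}{|x-y|^{N+2s}}\,dx\,dy\geq 0$, and the nonlinearities contribute couplings such as $\int_{\R^N}(I_{\alpha}*(u^{+})^{p})(u^{-})^{p}$. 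Hence I would first study, for fixed $u$ with $u^{\pm}\neq 0$, the two-parameter fibering map $\psi_{u}(a,b):={\mathcal E}_\lambda(au^{+}+bu^{-})$ on $(0,\infty)^{2}$. Expanding it as a combination of the quadratic monomials $a^{2},b^{2},ab$ and the superquadratic monomials $a^{2p},b^{2p},a^{p}b^{p},a^{2q},b^{2q},a^{q}b^{q}$, the hypothesis $p>q>2$ makes the term $-\frac{1}{2p}a^{2p}\int(I_{\alpha}*(u^{+})^{p})(u^{+})^{p}$ the dominant one, so that $\psi_{u}\to-\infty$ as $|(a,b)|\to\infty$ regardless of the sign of $\lambda$, while $\psi_{u}(a,b)>0$ for $(a,b)$ near the origin. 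I would then prove that $\psi_{u}$ has an interior maximum point $(a_{u},b_{u})\in(0,\infty)^{2}$, at which $a_{u}u^{+}+b_{u}u^{-}\in{\mathcal M}_\lambda$, and that this point is unique.

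The main obstacle is exactly this projection onto ${\mathcal M}_\lambda$ in the regime $\lambda\in\R$. Writing $P=\|u^{+}\|_{X_{V}^{s}}^{2}$, $Q=\|u^{-}\|_{X_{V}^{s}}^{2}$ and letting $A_{1},A_{2},A_{12}$ (resp. $C_{1},C_{2},C_{12}$) denote the diagonal and cross $I_{\alpha}$- (resp. $I_{\beta}$-) integrals, the condition $\nabla\psi_{u}=0$ becomes the coupled system
\begin{equation*}
\begin{aligned}
a^{2}P+abB&=a^{2p}A_{1}+a^{p}b^{p}A_{12}+\lambda\,a^{2q}C_{1}+\lambda\,a^{q}b^{q}C_{12},\\
b^{2}Q+abB&=b^{2p}A_{2}+a^{p}b^{p}A_{12}+\lambda\,b^{2q}C_{2}+\lambda\,a^{q}b^{q}C_{12}.
\end{aligned}
\end{equation*}
The mixed powers $a^{p}b^{p},a^{q}b^{q}$ together with the indefinite sign of the $\lambda$-terms obstruct the monotone rearrangement available in the local, $\lambda=0$ case. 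I would establish existence of a solution by a topological argument (Miranda's theorem or Brouwer degree on a large box in $(0,\infty)^{2}$), using the signs of $\partial_{a}\psi_{u}$ and $\partial_{b}\psi_{u}$ near the axes and at infinity; uniqueness and the maximality of $(a_{u},b_{u})$ would follow from the fact that, since $p>q>2$, along each ray the relevant scalar function is first increasing and then strictly decreasing through its unique zero, the lower-order positive $q$-contribution being unable to create a second critical point. Here the hypothesis $(N-4s)_{+}<\alpha,\beta<N$ enters only to guarantee, via $\frac{N+\gamma}{N-2s}>2$, that the admissible range for $p,q>2$ in \eqref{p}--\eqref{q} is nonempty, so that all the above integrals are finite by the Hardy--Littlewood--Sobolev inequality \eqref{hli} and the Sobolev embedding. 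Uniqueness also yields the continuity of $u\mapsto(a_{u},b_{u})$, which is needed in the last step.

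Granting the projection, I would verify $c_\lambda=\inf_{{\mathcal M}_\lambda}{\mathcal E}_\lambda>0$ as follows. On ${\mathcal M}_\lambda$ one has $\langle{\mathcal E}_\lambda'(u),u\rangle=\langle{\mathcal E}_\lambda'(u),u^{+}\rangle+\langle{\mathcal E}_\lambda'(u),u^{-}\rangle=0$, so
\begin{equation*}
\begin{aligned}
{\mathcal E}_\lambda(u)={\mathcal E}_\lambda(u)-\tfrac{1}{2q}\langle{\mathcal E}_\lambda'(u),u\rangle&=\Big(\tfrac12-\tfrac1{2q}\Big)\|u\|_{X_{V}^{s}}^{2}+\Big(\tfrac1{2q}-\tfrac1{2p}\Big)\int_{\R^N}(I_{\alpha}*|u|^{p})|u|^{p}\\
&\geq\Big(\tfrac12-\tfrac1{2q}\Big)\|u\|_{X_{V}^{s}}^{2},
\end{aligned}
\end{equation*}
with both coefficients positive because $p>q>2$; combined with a uniform bound $\|u^{\pm}\|_{X_{V}^{s}}\geq\delta>0$ (obtained from the Nehari identities $\langle{\mathcal E}_\lambda'(u),u^{\pm}\rangle=0$, the inequality \eqref{hli} and Sobolev, using that all nonlinear terms are $o(\|u^{\pm}\|_{X_{V}^{s}}^{2})$ as $\|u^{\pm}\|_{X_{V}^{s}}\to0$) this gives $c_\lambda>0$. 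I then take a minimizing sequence $\{u_{n}\}\subset{\mathcal M}_\lambda$; the same identity shows $\{u_{n}\}$ is bounded in $X_{V}^{s}$. Here the hypotheses $(V1)$ and $(V2)$ are decisive: by \cite[Lemma 2.1]{T2017} the embedding $X_{V}^{s}(\R^{N})\hookrightarrow L^{r}(\R^{N})$ is compact for $r\in[2,2_{s}^{*})$, so up to a subsequence $u_{n}\rightharpoonup u$ in $X_{V}^{s}$ and $u_{n}^{\pm}\to u^{\pm}$ strongly in every such $L^{r}$. The strong convergence, with \eqref{hli}, makes all diagonal and cross nonlocal terms converge, so the lower bounds $\|u_{n}^{\pm}\|_{L^{r}}\geq\delta'$ pass to the limit and force $u^{\pm}\neq 0$. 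Projecting $u$ onto ${\mathcal M}_\lambda$ via the first step and combining weak lower semicontinuity of $\|\cdot\|_{X_{V}^{s}}^{2}$ with the maximality property of the fibering maps (noting $\psi_{u_{n}}(1,1)={\mathcal E}_\lambda(u_{n})$) shows $\max\psi_{u}=c_\lambda$ is attained, so the projection of $u$ lies in ${\mathcal M}_\lambda$ and realizes $c_\lambda$.

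It remains to upgrade this constrained minimizer $u$ to a genuine weak solution, i.e. ${\mathcal E}_\lambda'(u)=0$ in $(X_{V}^{s})'$. I would argue by contradiction: if ${\mathcal E}_\lambda'(u)\neq 0$, then on a small ball $\|{\mathcal E}_\lambda'\|$ stays bounded away from zero and a deformation decreasing the energy can be constructed; combining it with the continuity of the projection $(a,b)\mapsto au^{+}+bu^{-}$ and a Brouwer-degree argument for the map $(a,b)\mapsto\big(\langle{\mathcal E}_\lambda'(au^{+}+bu^{-}),u^{+}\rangle,\langle{\mathcal E}_\lambda'(au^{+}+bu^{-}),u^{-}\rangle\big)$ near $(a_{u},b_{u})$ produces an element of ${\mathcal M}_\lambda$ with energy strictly below $c_\lambda$, a contradiction. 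This deformation-and-degree step is the second delicate point, once more because the nonlocal cross terms prevent the clean separation of $u^{+}$ and $u^{-}$ used in the classical local argument; the nonlocal Brezis--Lieb lemmas announced in Section~3.2 are what render the limits in this step rigorous.
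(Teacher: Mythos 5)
Your overall architecture is the same as the paper's: a two-parameter projection onto the nodal Nehari set $\mathcal{M}_\lambda$ (the paper's Lemma~\ref{frl1}), a minimizing sequence made compact by the embedding $X_V^s(\R^N)\hookrightarrow L^r(\R^N)$, $r\in[2,2_s^*)$, coming from $(V1)$--$(V2)$ (Lemma~\ref{frl2}), and a deformation-plus-Brouwer-degree argument to upgrade the constrained minimizer to a free critical point (Lemma~\ref{frl3}). Your second and fourth steps, including the identity $\mathcal{E}_\lambda(u)-\frac{1}{2q}\langle\mathcal{E}_\lambda'(u),u\rangle$ and the cutoff/degree construction, essentially reproduce the paper's. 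The genuine gap is in your first step, exactly at the point you yourself identify as the main obstacle: \emph{uniqueness} of the critical point $(a_u,b_u)$ of the fibering map $\psi_u$, and the consequent maximality property (if $u\in\mathcal{M}_\lambda$ then $\mathcal{E}_\lambda(u)=\max_{\tau,\theta\ge 0}\mathcal{E}_\lambda(\tau u^++\theta u^-)$). Your justification --- that along each ray the scalar function is increasing then strictly decreasing through its unique zero --- is a one-dimensional statement, and it does not imply that the two-variable function $\psi_u$ has only one critical point in $(0,\infty)^2$: a function on the quadrant can be strictly unimodal along every ray through the origin and still have two interior local maxima separated by a saddle, and the coupling terms $abB$, $a^pb^pA_{12}$, $\lambda a^qb^qC_{12}$ are precisely what prevents the system from decoupling into two scalar Nehari equations. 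This is not a deferrable technicality. Without uniqueness/maximality, (i) the chain $c_\lambda\le\mathcal{E}_\lambda(\tau_0u^++\theta_0u^-)\le\lim_n\mathcal{E}_\lambda(u_n)=c_\lambda$ in your compactness step collapses, because the middle inequality rests on $\mathcal{E}_\lambda(\tau_0u_n^++\theta_0u_n^-)\le\mathcal{E}_\lambda(u_n)$, i.e.\ maximality of $(1,1)$ for elements of $\mathcal{M}_\lambda$; and (ii) the contradiction in the degree step needs the strict inequality $\mathcal{E}_\lambda(\tau_1u^++\theta_1u^-)<\mathcal{E}_\lambda(\tau_0u^++\theta_0u^-)=c_\lambda$ for every $(\tau_1,\theta_1)\ne(\tau_0,\theta_0)$, again strict global maximality at a unique point. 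Your Miranda/degree argument on a box does yield \emph{existence} of the projection (the boundary signs work, since for $p,q>2$ the term $abB$ dominates near the axes), but existence alone is not what the rest of the proof consumes.

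The paper closes this hole by a change of variables rather than by ray monotonicity: it studies $\Phi(\tau,\theta)=\mathcal{E}_\lambda(\tau^{1/(2p)}u^++\theta^{1/(2p)}u^-)$, so that every monomial acquires exponent at most $1$, and argues that $\Phi$ is strictly concave and hence has at most one critical point; the limits \eqref{fr2}--\eqref{fr3} exclude boundary maxima, giving exactly one interior maximum and, with it, uniqueness and global maximality in one stroke. (The concavity verification itself requires care --- the cross terms $\tau^{1/2}\theta^{1/2}a_4$ and, for $\lambda>0$, the $I_\beta$-terms enter with the unfavorable sign --- but this substitution is the device your ray argument would have to be replaced by, or else supplemented by a genuinely two-dimensional uniqueness proof.) Two smaller remarks. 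First, the nonlocal Brezis--Lieb lemmas play no role in this theorem: they are the compactness substitute for Theorem~\ref{gstate}, where only $(V1)$ holds; here the compact embedding plus the Hardy--Littlewood--Sobolev inequality already passes all diagonal and cross nonlocal terms to the limit, exactly as in your own step two, so invoking them in the deformation step is a misreading of where the difficulty sits. Second, your reading of the hypothesis $(N-4s)_+<\alpha,\beta<N$ --- that it makes $\frac{N+\gamma}{N-2s}>2$ so the window $p,q>2$ in \eqref{p}--\eqref{q} is nonempty --- agrees with the paper.
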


\medskip

The remaining of the paper is organized as follows. In Section $2$ we collect  some nonlocal versions of the Bezis-Lieb lemma which will be crucial in investigating the groundstate solutions of \eqref{fr}. Further,  Section $3$ and $4$ contain the proofs of our main results.

\bigskip

\section{Preliminary results}

\begin{lemma}\label{cc}(\cite[Lemma 1.1]{L1984}, \cite[Lemma 2.3]{MV2013})

Let $r\in [2,2_{s}^*]$. There exists a constant $C>0$ such that for any $u\in X_{V}^{s}(\R^{N})$ we have
$$
\int_{\R^N}|u|^r \leq C||u||\Big(\sup_{y\in \R^N} \int_{B_1(y)}|u|^r \Big)^{1-\frac{2}{r}}.
$$
\end{lemma}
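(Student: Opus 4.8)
The plan is to establish the inequality locally on unit balls, by interpolation combined with the fractional Sobolev embedding, and then to globalise it through a covering of $\R^N$ with bounded overlap.

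First I would fix $r\in[2,2_s^*]$ and work on a single ball $B=B_1(y)$. Writing $\frac1r=\frac{1-\theta}{2}+\frac{\theta}{2_s^*}$ and combining the elementary $L^r$ interpolation inequality with the weighted arithmetic--geometric mean inequality, I obtain
$$\|u\|_{L^r(B)}^2\le \|u\|_{L^2(B)}^{2(1-\theta)}\,\|u\|_{L^{2_s^*}(B)}^{2\theta}\le \|u\|_{L^2(B)}^2+\|u\|_{L^{2_s^*}(B)}^2.$$
The fractional Sobolev embedding $H^s(B)\hookrightarrow L^{2_s^*}(B)$ on the unit ball (with constant independent of $y$ by translation invariance) then gives $\|u\|_{L^{2_s^*}(B)}^2\le C\big(\|u\|_{L^2(B)}^2+[u]_{s,2,B}^2\big)$, where $[u]_{s,2,B}^2=\int_B\!\int_B\frac{|u(x)-u(y)|^2}{|x-y|^{N+2s}}\,dx\,dy$ is the seminorm restricted to $B\times B$. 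Splitting $\int_B|u|^r=\|u\|_{L^r(B)}^{\,r-2}\,\|u\|_{L^r(B)}^2$ and bounding the first factor by the supremum over all unit balls, I arrive at the local estimate
$$\int_{B_1(y)}|u|^r\le C\Big(\sup_{z\in\R^N}\int_{B_1(z)}|u|^r\Big)^{1-\frac2r}\big(\|u\|_{L^2(B_1(y))}^2+[u]_{s,2,B_1(y)}^2\big).$$

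Next I would choose a countable covering $\{B_1(y_i)\}_i$ of $\R^N$ by unit balls such that every point lies in at most $\kappa=\kappa(N)$ of them. Summing the local estimate over $i$ and using the finite overlap, the sum $\sum_i\|u\|_{L^2(B_1(y_i))}^2$ is controlled by $\kappa\|u\|_{L^2(\R^N)}^2$, while $\sum_i[u]_{s,2,B_1(y_i)}^2\le\kappa\,[u]_{s,2}^2$ since $\sum_i\mathbf 1_{B_1(y_i)}(x)\mathbf 1_{B_1(y_i)}(y)\le\kappa$ pointwise. Finally, hypothesis $(V1)$ gives $\|u\|_{L^2(\R^N)}^2\le V_0^{-1}\int_{\R^N}Vu^2$, so that $\|u\|_{L^2(\R^N)}^2+[u]_{s,2}^2\le C\|u\|_{X_V^s}^2$, which yields the claim, the norm entering to the second power.

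The main obstacle is the nonlocality of the Gagliardo seminorm. Unlike the local case, the Sobolev embedding on a ball is not genuinely local, so some care is needed to run it on $B_1(y)$ with a constant uniform in $y$; and the decisive point in the globalisation is the bound $\sum_i[u]_{s,2,B_1(y_i)}^2\le\kappa\,[u]_{s,2}^2$, which rests on the observation that the double indicator $\mathbf 1_{B_1(y_i)}(x)\mathbf 1_{B_1(y_i)}(y)$ is summable with the same overlap constant $\kappa$ as in the single-variable case. This is exactly what allows the diagonal double integrals over the covering balls to be absorbed into the global seminorm $[u]_{s,2}^2$.
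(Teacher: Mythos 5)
Your proof is correct. Note first that the paper itself gives no proof of this lemma: it is simply quoted from \cite[Lemma 1.1]{L1984} and \cite[Lemma 2.3]{MV2013}, so there is no internal argument to compare against; what you have written supplies the standard proof of the cited result, adapted to the fractional setting. Your three ingredients --- interpolation plus the fractional Sobolev embedding on a unit ball (with constant uniform in the centre by translation invariance), a covering of $\R^N$ by unit balls with overlap constant $\kappa(N)$, and absorption of the localized Gagliardo seminorms via the pointwise bound $\sum_i\mathbf 1_{B_1(y_i)}(x)\,\mathbf 1_{B_1(y_i)}(y)\le\kappa$ --- are exactly what is needed, and the last point is the only place where the nonlocal case genuinely differs from Lions' classical $H^1$ argument; you handle it correctly, since the product of two indicators is dominated by a single one. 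One further remark: your final inequality carries $\|u\|_{X_V^s}^2$, whereas the statement as printed in the paper has $\|u\|$ to the first power. Your version is the right one: the printed inequality is not homogeneous in $u$ (under $u\mapsto tu$ the left side scales like $t^r$, the right side like $t^{r-1}$), and the cited \cite[Lemma 2.3]{MV2013} indeed has the norm squared. The discrepancy is a typo in the paper, not a gap in your argument.
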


\begin{lemma}\label{bogachev}(\cite[Proposition 4.7.12]{B2007})

Let $r\in (1,\infty)$. Assume $(w_n)$ is a bounded sequence in $L^r(\R^N)$ that converges to $w$ almost everywhere. Then $w_n\rightharpoonup w$ weakly in $L^r(\R^N)$.
\end{lemma}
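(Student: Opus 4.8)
The plan is to prove weak convergence directly from the definition, the only subtle point being the absence of a dominating function (so that dominated convergence is unavailable) and the need to exploit the reflexivity of $L^{r}(\R^{N})$ encoded in the strict inequality $r>1$. First I would check that the candidate limit lies in the right space: since $w_{n}\to w$ almost everywhere, Fatou's lemma gives $\int_{\R^{N}}|w|^{r}\le \liminf_{n\to\infty}\int_{\R^{N}}|w_{n}|^{r}\le \sup_{n}\|w_{n}\|_{r}^{r}<\infty$, so $w\in L^{r}(\R^{N})$ and the pairing $\int_{\R^{N}}w\varphi$ is well defined for every $\varphi$ in the dual space $L^{r'}(\R^{N})$, where $r'=\frac{r}{r-1}$. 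The goal is then to show $\int_{\R^{N}}w_{n}\varphi\to\int_{\R^{N}}w\varphi$ for all such $\varphi$.

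Next I would reduce to compactly supported test functions. Writing $M=\sup_{n}\|w_{n}\|_{r}+\|w\|_{r}<\infty$, for any $\psi\in C_{c}(\R^{N})$ Hölder's inequality yields $\big|\int_{\R^{N}}(w_{n}-w)(\varphi-\psi)\big|\le M\|\varphi-\psi\|_{r'}$. Since $r'\in(1,\infty)$, the class $C_{c}(\R^{N})$ is dense in $L^{r'}(\R^{N})$, so a standard three-term estimate shows it suffices to prove $\int_{\R^{N}}w_{n}\psi\to\int_{\R^{N}}w\psi$ for $\psi\in C_{c}(\R^{N})$.

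For such a $\psi$ with support contained in a compact set $K$, the crux is to pass to the limit without domination. Fix $\delta>0$. By Egorov's theorem there is a measurable set $E\subset K$ with $|K\setminus E|<\delta$ on which $w_{n}\to w$ uniformly. On the bad set I would use Hölder together with the uniform $L^{r}$ bound, $\big|\int_{K\setminus E}(w_{n}-w)\psi\big|\le M\,\|\psi\|_{\infty}\,|K\setminus E|^{1/r'}\le M\|\psi\|_{\infty}\delta^{1/r'}$, which is small uniformly in $n$; on the good set the uniform convergence and the finiteness of $|E|$ give $\int_{E}(w_{n}-w)\psi\to 0$. Letting first $n\to\infty$ and then $\delta\to 0$ completes the argument.

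The main obstacle, and the only genuinely delicate point, is controlling the contribution on the small set $K\setminus E$: it is precisely here that the hypothesis $r>1$ is indispensable, since the $L^{r}$-boundedness furnishes the uniform integrability that produces the harmless factor $\delta^{1/r'}$. Indeed the statement is false for $r=1$, as concentrating bumps show, which is exactly the reason reflexivity (equivalently, uniform integrability on bounded sets) is the essential ingredient; everything else is routine.
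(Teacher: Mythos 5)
Your proof is correct, but note that the paper does not actually prove this lemma: it is quoted directly from Bogachev \cite[Proposition 4.7.12]{B2007}, so there is no internal argument to compare against. Your route --- reduce via H\"older and density of $C_c(\R^N)$ in $L^{r'}(\R^N)$ to compactly supported test functions, then split the support by Egorov's theorem and control the exceptional set through the uniform $L^r$ bound, the factor $|K\setminus E|^{1/r'}$ supplying exactly the smallness that fails when $r=1$ --- is the standard elementary proof, and every step checks out: Fatou gives $w\in L^r(\R^N)$, Egorov applies because $|K|<\infty$, and your three-term estimate is uniform in $n$. The usual alternative (and what one would extract from the cited reference) goes through weak compactness instead: boundedness in the reflexive space $L^r(\R^N)$ yields a weakly convergent subsequence by Banach--Alaoglu, the almost-everywhere convergence identifies every weak subsequential limit with $w$, and a subsequence-of-subsequences argument upgrades this to weak convergence of the full sequence. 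Your argument buys self-containedness and makes transparent where $r>1$ enters; the compactness argument is shorter but hides the same Egorov-type computation inside the identification-of-the-limit step. Either way, the lemma stands, and your proof could replace the citation without loss.
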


\begin{lemma}\label{blc}{\sf (Local Brezis-Lieb lemma)}

Let $r\in (1,\infty)$. Assume $(w_n)$ is a bounded sequence in $L^r(\R^N)$ that converges to $w$ almost everywhere. Then, for every $q\in [1,r]$ we have
$$
\lim_{n\to\infty}\int_{\R^N}\big| |w_n|^q-|w_n-w|^q-|w|^q\big|^{\frac{r}{q}}=0\,,
$$
and
$$
\lim_{n\to\infty}\int_{\R^N}\big| |w_n|^{q-1}w_n-|w_n-w|^{q-1}(w_n-w)-|w|^{q-1}w\big|^{\frac{r}{q}}=0.
$$
\end{lemma}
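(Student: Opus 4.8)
The plan is to run the classical Brezis--Lieb splitting argument, handling both displays at once through a single elementary pointwise inequality. Set $j(t)=|t|^q$ for the first identity and $j(t)=|t|^{q-1}t$ for the second; in either case $j$ is continuous, $j(0)=0$, and $|j(t)|=|t|^q$. The key preliminary step is the estimate: for every $\varepsilon>0$ there is $C_\varepsilon>0$ such that for all $a,b\in\R$,
$$
\big|j(a+b)-j(a)\big|\le \varepsilon|a|^q+C_\varepsilon|b|^q.
$$
I would prove this by splitting into the regimes $|b|>|a|$ and $|b|\le|a|$. In the first regime both $|j(a+b)|$ and $|j(a)|$ are bounded by a constant times $|b|^q$, so the left-hand side is $\le C|b|^q$. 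In the second regime, since $q\ge1$ the map $j$ is locally Lipschitz with $|j'(t)|\le q|t|^{q-1}$ almost everywhere, and the fundamental theorem of calculus gives $|j(a+b)-j(a)|\le q|b|\,(|a|+|b|)^{q-1}\le q2^{q-1}|a|^{q-1}|b|$; Young's inequality then absorbs $|a|^{q-1}|b|$ into $\varepsilon|a|^q+C_\varepsilon|b|^q$. Using $|j(b)|=|b|^q$ and the triangle inequality one deduces $\big|j(a+b)-j(a)-j(b)\big|\le \varepsilon|a|^q+(C_\varepsilon+1)|b|^q$, and raising to the power $r/q\ge1$ (here the hypothesis $q\le r$ is essential) yields
$$
\big|j(a+b)-j(a)-j(b)\big|^{r/q}\le 2^{\frac{r}{q}-1}\Big(\varepsilon^{r/q}|a|^r+(C_\varepsilon+1)^{r/q}|b|^r\Big).
$$

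Next I would apply this with $a=w_n-w$ and $b=w$, so that $a+b=w_n$. Writing
$$
F_n:=\Big|\,|w_n|^q-|w_n-w|^q-|w|^q\,\Big|^{r/q}
$$
(and its signed analogue for the second identity), the pointwise inequality gives $F_n\le 2^{\frac{r}{q}-1}\varepsilon^{r/q}|w_n-w|^r+2^{\frac{r}{q}-1}(C_\varepsilon+1)^{r/q}|w|^r$. Since $w_n\to w$ almost everywhere, $F_n\to 0$ almost everywhere. I would then introduce the Brezis--Lieb truncation
$$
H_{n,\varepsilon}:=\Big(F_n-2^{\frac{r}{q}-1}\varepsilon^{r/q}|w_n-w|^r\Big)_+,
$$
which satisfies $0\le H_{n,\varepsilon}\le 2^{\frac{r}{q}-1}(C_\varepsilon+1)^{r/q}|w|^r$ and $H_{n,\varepsilon}\to0$ almost everywhere. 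Here $w\in L^r(\R^N)$ by Fatou's lemma, so the dominating function is integrable and the dominated convergence theorem yields $\int_{\R^N}H_{n,\varepsilon}\to0$.

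To conclude, I would combine $F_n\le H_{n,\varepsilon}+2^{\frac{r}{q}-1}\varepsilon^{r/q}|w_n-w|^r$ with the uniform bound $\sup_n\int_{\R^N}|w_n-w|^r=:M<\infty$ (from boundedness of $(w_n)$ in $L^r(\R^N)$ together with $w\in L^r(\R^N)$) to obtain $\limsup_{n\to\infty}\int_{\R^N}F_n\le 2^{\frac{r}{q}-1}\varepsilon^{r/q}M$; letting $\varepsilon\to0$ settles the first identity, and the identical argument with $j(t)=|t|^{q-1}t$ settles the second. The main obstacle is the elementary inequality: it must be arranged so that the $\varepsilon|a|^q$ term is controllable uniformly in $n$ (so that it can later be traded against the uniform $L^r$ bound), with constants chosen uniformly in $q\in[1,r]$, and one must check that raising to the power $r/q$ preserves the splitting — which is precisely where $1\le q\le r$ enters. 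Everything after that is a routine application of dominated convergence.
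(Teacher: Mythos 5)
Your proposal is correct and follows essentially the same route as the paper's own proof: the elementary $\varepsilon$-Young inequality, the Brezis--Lieb truncation $\big(F_n-\varepsilon\text{-term}\big)_+$ dominated by a multiple of $|w|^r$, dominated convergence, and the final $\limsup\le C\varepsilon$ step. You merely supply details the paper leaves implicit — a proof of the pointwise inequality, the careful constants from raising to the power $r/q$, the Fatou observation that $w\in L^r(\R^N)$, and the unified treatment of both displays via $j(t)=|t|^{q-1}t$.
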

\begin{proof}
Fix $\varepsilon>0$. Then there exists $C(\varepsilon)>0$ such that for all $a$,$b\in \R$ we have
\begin{equation}\label{lb1}
\Big||a+b|^{q}-|a|^{q}\big|^{\frac{r}{q}}\leq \varepsilon|a|^{r}+C(\varepsilon)|b|^{r}.
\end{equation}
Using \eqref{lb1}, we obtain
$$
\begin{aligned}
|f_{n, \varepsilon}|=& \Big( \Big||w_{n}|^{q}-|w_{n}-w|^{q}-|w^{q}|\Big|^{\frac{r}{q}}-\varepsilon|w_{n}-w|^{r}\Big)^{+}\\
&\leq (1+C(\varepsilon))|w|^{r}.
\end{aligned}
$$
Now using Lebesgue Dominated Convergence theorem, we have
\begin{equation}
\intr {f_{n, \varepsilon}} \rightarrow 0  \quad\mbox{ as } n\rightarrow \infty.
\end{equation}

Therefore, we get
$$
\Big||w_{n}|^{q}-|w_{n}-w|^{q}-|w|^{q}\Big|^{\frac{r}{q}}\leq f_{n, \varepsilon}+\varepsilon|w_{n}-w|^{r},
$$
which gives
$$
\limsup_{n\rightarrow \infty} {\intr \Big||w_{n}|^{q}-|w_{n}-w|^{q}-|w|^{q}\Big|^{\frac{r}{q}}}\leq c\varepsilon,
$$
where $c= \sup_{n}|w_{n}-w|_{r}^{r}< \infty$. Further letting $\varepsilon \rightarrow 0$, we conclude the proof.
\end{proof}

\begin{lemma}\label{nlocbl}{\sf (Nonlocal Brezis-Lieb lemma}(\cite[Lemma 2.4]{MV2013})

Let $\alpha\in (0,N)$ and $p\in [1,\frac{2N}{N+\alpha})$. Assume $(u_n)$ is a bounded sequence in $L^{\frac{2Np}{N+\alpha}}(\R^N)$ that converges almost everywhere to  some $u:\R^N\to \R$. Then
$$
\lim_{n\to \infty}\int_{\R^N}\Big|(I_\alpha*|u_n|^p)|u_n|^p-(I_\alpha*|u_n-u|^p)|u_n-u|^p-(I_\alpha*|u|^p)|u|^p  \Big|=0.
$$
\end{lemma}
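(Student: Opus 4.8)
The plan is to reduce everything, via the Hardy--Littlewood--Sobolev inequality \eqref{hli}, to estimates in the single Lebesgue space $L^{2N/(N+\alpha)}(\R^N)$, which is the natural endpoint for the bilinear form $(f,g)\mapsto\int_{\R^N}(I_\alpha*f)g$ when $f,g$ are $p$-th powers of functions bounded in $L^{2Np/(N+\alpha)}(\R^N)$. Indeed, since $(u_n)$ is bounded in $L^{2Np/(N+\alpha)}$ (and $u\in L^{2Np/(N+\alpha)}$ by Fatou), each of $|u_n|^p$, $|u_n-u|^p$ and $|u|^p$ is bounded in $L^{2N/(N+\alpha)}$; with $r=t=2N/(N+\alpha)$ one has $\frac1r+\frac1t=1+\frac\alpha N$, so \eqref{hli} applies to any pair of these powers.

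First I would apply the local Brezis--Lieb lemma (Lemma \ref{blc}) with $w_n=u_n$, $w=u$, ambient exponent $2Np/(N+\alpha)$ and power $q=p$; here $1\le p\le 2Np/(N+\alpha)$ since $\alpha<N$, so the hypotheses are met. This gives that the error
$$
e_n:=|u_n|^p-|u_n-u|^p-|u|^p\longrightarrow 0\quad\text{in }L^{2N/(N+\alpha)}(\R^N).
$$
Writing $|u_n|^p=|u_n-u|^p+|u|^p+e_n$ and expanding $(I_\alpha*|u_n|^p)|u_n|^p$ by bilinearity, the target integrand splits as $T_n=P_n+Q_n$, where
$$
P_n=(I_\alpha*|u_n-u|^p)\,|u|^p+(I_\alpha*|u|^p)\,|u_n-u|^p
$$
collects the two ``cross'' terms between $|u_n-u|^p$ and $|u|^p$, while $Q_n$ collects the remaining five terms, each carrying a factor $e_n$ either inside the convolution or outside.

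The term $Q_n$ is routine: applying \eqref{hli} to the absolute values of the factors and using that $\|e_n\|_{2N/(N+\alpha)}\to0$ while $\||u_n-u|^p\|_{2N/(N+\alpha)}$ and $\||u|^p\|_{2N/(N+\alpha)}$ stay bounded, each term of $\int_{\R^N}|Q_n|$ is dominated by a product $C\|e_n\|_{2N/(N+\alpha)}(\text{bounded})\to0$. The heart of the matter is $P_n$, which does \emph{not} decay pointwise in any obvious strong sense; here I would exploit its sign. Since $I_\alpha>0$ and both factors are nonnegative, $P_n\ge0$, so $\int_{\R^N}|P_n|=\int_{\R^N}P_n$, and by symmetry of the kernel,
$$
\int_{\R^N}P_n=2\int_{\R^N}(I_\alpha*|u|^p)\,|u_n-u|^p.
$$
Now $|u_n-u|^p\to0$ a.e.\ and is bounded in $L^{2N/(N+\alpha)}$, so by Lemma \ref{bogachev} it converges weakly to $0$ in $L^{2N/(N+\alpha)}$; since $I_\alpha*|u|^p$ lies in the dual space $L^{2N/(N-\alpha)}$ (the mapping form of \eqref{hli}), the pairing tends to $0$. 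Combining, $\int_{\R^N}|T_n|\le\int_{\R^N}P_n+\int_{\R^N}|Q_n|\to0$, which is the claim.

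The only genuinely delicate point is the vanishing of $\int_{\R^N}P_n$: the cross terms survive all the pointwise HLS estimates, and it is precisely the positivity of the Riesz kernel together with the weak convergence $|u_n-u|^p\rightharpoonup0$ that forces them to disappear in the integral. This is the nonlocal analogue of the cancellation that is automatic in the classical pointwise Brezis--Lieb argument, and it is the step I expect to require the most care.
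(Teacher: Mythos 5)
Your proof is correct, and it uses the same basic ingredients as the paper (Lemma \ref{blc} with $q=p$, $r=\tfrac{2Np}{N+\alpha}$, Lemma \ref{bogachev}, and the Hardy--Littlewood--Sobolev inequality \eqref{hli}), but the decomposition and what it ultimately delivers are genuinely different. The paper sets $a_n=|u_n|^p$, $b_n=|u_n-u|^p$ and uses the identity
$$
\int_{\R^N}\big[(I_\alpha*a_n)a_n-(I_\alpha*b_n)b_n\big]
=\int_{\R^N}[I_\alpha*(a_n-b_n)](a_n-b_n)+2\int_{\R^N}[I_\alpha*(a_n-b_n)]\,b_n ,
$$
which holds only \emph{after} integration (it relies on the symmetry of the kernel via Fubini, since pointwise the two sides differ by $(I_\alpha*a_n)b_n-(I_\alpha*b_n)a_n$); it then passes to the limit using $a_n-b_n\to|u|^p$ strongly (local Brezis--Lieb plus HLS) and $b_n\rightharpoonup 0$ weakly. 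Consequently the paper's argument, as written, establishes the convergence of the \emph{integrals}, i.e.\ $\int_{\R^N}\big[(I_\alpha*|u_n|^p)|u_n|^p-(I_\alpha*|u_n-u|^p)|u_n-u|^p\big]\to\int_{\R^N}(I_\alpha*|u|^p)|u|^p$, which is weaker than the statement of Lemma \ref{nlocbl} (absolute value inside the integral) but is exactly what is used later, e.g.\ in \eqref{bl3}. Your expansion $|u_n|^p=|u_n-u|^p+|u|^p+e_n$ instead works at the level of the integrand: the five $e_n$-terms are killed in $L^1$ by HLS together with $\|e_n\|_{2N/(N+\alpha)}\to 0$, and the two cross terms are handled by your positivity-plus-symmetry observation, $\int P_n = 2\int(I_\alpha*|u|^p)|u_n-u|^p\to 0$ by weak convergence. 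This costs a slightly longer bookkeeping (seven terms instead of two), but it buys the full $L^1$ statement as literally written in the lemma (and as in Moroz--Van Schaftingen), not just the convergence of the integrals; your identification of the cross terms as the only genuinely delicate contribution is exactly right, and the same positivity of $b_n$ is implicitly what would be needed to upgrade the paper's argument as well.
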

\begin{proof} For $n\in N$, we observe that
\begin{equation}\label{nb1}
\begin{aligned}
\intr \Big[(\ia*|u_n|^p)|u_n|^{p}-(\ia*(|u_n-u|^p))(|u_n-u|^{p})\Big]&=\intr [\ia*(|u_n|^p-|u_n-u|^p)](|u_n|^p-|u_n-u|^p)\\
&+2\intr [\ia*(|u_n|^p-|u_n-u|^p)|u_n-u|^p.
\end{aligned}
\end{equation}
Using Lemma \ref{blc} with $q=p$, $r=\frac{2Np}{N+\alpha}$, we have $|u_n-u|^p-|u_n|^p\to |u|^p$ strongly in $L^{\frac{2N}{N+\alpha}}(\R^N)$ and by Lemma \ref{bogachev} we get $|u_n-u|^{p}\rightharpoonup 0$ weakly in $L^{\frac{2N}{N+\alpha}}(\R^{N})$. Also by Hardy-Littlewood-Sobolev inequality \eqref{hli} we obtain 
$$
\ia*(|u_n-u|^p-|u_n|^{p})\to \ia*|u|^p \quad\mbox{ in } L^{\frac{2N}{N-\alpha}}(\R^N).
$$
Using all the above arguments and passing to the limit in \eqref{nb1} we conclude the proof.

\end{proof}

\begin{lemma}\label{anbl}

Let $\alpha\in (0,N)$ and $p\in [1,\frac{2N}{N+\alpha})$. Assume $(u_n)$ is a bounded sequence in $L^{\frac{2Np}{N+\alpha}}(\R^N)$ that converges almost everywhere to $u$. Then, for any $h\in L^{\frac{2Np}{N+\alpha}}(\R^N)$ we have
$$
\lim_{n\to \infty}\int_{\R^N}(I_\alpha*|u_n|^p)|u_n|^{p-2}u_nh=\int_{\R^N} (I_\alpha*|u|^p)|u|^{p-2}uh.
$$
\end{lemma}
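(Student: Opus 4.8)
The plan is to read both sides as a pairing in the duality between $L^{\frac{2N}{N-\alpha}}(\R^N)$ and $L^{\frac{2N}{N+\alpha}}(\R^N)$, and then to split the difference into a (bounded)$\times$(strongly null) term and a (weakly null)$\times$(fixed) term. Set $r=\frac{2Np}{N+\alpha}$, so that $(u_n)$ is bounded in $L^{r}(\R^N)$ and $(|u_n|^p)$ is bounded in $L^{\frac{2N}{N+\alpha}}(\R^N)$. By the Hardy--Littlewood--Sobolev inequality \eqref{hli} the potentials $A_n:=\ia*|u_n|^p$ are bounded in $L^{\frac{2N}{N-\alpha}}(\R^N)$; I also set $A:=\ia*|u|^p$, $B_n:=|u_n|^{p-2}u_n$ and $B:=|u|^{p-2}u$. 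Since $\tfrac{N-\alpha}{2N}+\tfrac{N+\alpha}{2N}=1$, every integral in the statement is finite by H\"older's inequality, once one notes that $B_n$ is bounded in $L^{\frac{r}{p-1}}(\R^N)$ and $h\in L^{r}(\R^N)$ force $B_nh,\,Bh\in L^{\frac{2N}{N+\alpha}}(\R^N)$ (because $\tfrac{p-1}{r}+\tfrac1r=\tfrac pr=\tfrac{N+\alpha}{2N}$).

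First I would record the weak convergence $A_n\rightharpoonup A$ in $L^{\frac{2N}{N-\alpha}}(\R^N)$. As $|u_n|^p\to|u|^p$ almost everywhere and $(|u_n|^p)$ is bounded in $L^{\frac{2N}{N+\alpha}}(\R^N)$, Lemma \ref{bogachev} gives $|u_n|^p\rightharpoonup|u|^p$ weakly in that space; and since $\ia*(\cdot)$ is a bounded linear map $L^{\frac{2N}{N+\alpha}}(\R^N)\to L^{\frac{2N}{N-\alpha}}(\R^N)$ by \eqref{hli}, it is weak-to-weak continuous, whence $A_n\rightharpoonup A$.

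The decisive and least routine step is the second one: upgrading the convergence of the test factor $B_nh$ from almost everywhere to \emph{strong} in $L^{\frac{2N}{N+\alpha}}(\R^N)$, using that the fixed multiplier $h$ supplies equi-integrability and tightness. Put $g_n:=|u_n|^{p-2}u_n-|u|^{p-2}u$; since $t\mapsto|t|^{p-2}t$ is continuous for $p>1$ we have $g_n\to0$ a.e., while $g_n$ stays bounded in $L^{\frac{r}{p-1}}(\R^N)$. For any measurable set $E$, H\"older's inequality with exponents $\tfrac{p}{p-1}$ and $p$ gives
\[
\int_E|g_nh|^{\frac{2N}{N+\alpha}}\le\Big(\int_E|g_n|^{\frac{r}{p-1}}\Big)^{\frac{p-1}{p}}\Big(\int_E|h|^{r}\Big)^{\frac1p}\le C\Big(\int_E|h|^{r}\Big)^{\frac1p}.
\]
Because $|h|^{r}\in L^1(\R^N)$, the right-hand side tends to $0$ uniformly in $n$ both as $|E|\to0$ and as $E$ exhausts the complements of large balls; this is exactly equi-integrability and tightness, so Vitali's convergence theorem yields $g_nh\to0$, i.e. $B_nh\to Bh$ strongly in $L^{\frac{2N}{N+\alpha}}(\R^N)$.

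Finally I would combine the two ingredients through
\[
\intr A_nB_nh-\intr ABh=\intr A_n(B_nh-Bh)+\intr(A_n-A)Bh.
\]
The first term is bounded by $\|A_n\|_{\frac{2N}{N-\alpha}}\,\|B_nh-Bh\|_{\frac{2N}{N+\alpha}}\to0$ (bounded times strongly null), and the second tends to $0$ since $A_n\rightharpoonup A$ and $Bh$ lies in the dual space $L^{\frac{2N}{N+\alpha}}(\R^N)$; this is the claim. I expect the real obstacle to be precisely the strong convergence $B_nh\to Bh$: the nonlinear factor $|u_n|^{p-2}u_n$ converges only almost everywhere and weakly, and it is the integrability of the fixed $h$ that restores the compactness needed to pass to the limit. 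One should also note the role of the exponents: the strict gap $\tfrac{r}{p-1}>\tfrac rp$ (that is, $p>1$) is what creates room in the H\"older estimate above; the value $p=1$ would require the convention $|u|^{p-2}u=\operatorname{sgn}u$ and a separate argument, but it is excluded here by \eqref{p}.
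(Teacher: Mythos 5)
Your proof is correct, and it takes a genuinely different route from the paper's. The paper first reduces to $h\ge 0$, expands the integral through a four-term decomposition in $v_n=u_n-u$ (the same Brezis--Lieb-type splitting as in Lemma \ref{nlocbl}), applies the local Brezis--Lieb Lemma \ref{blc} twice --- to $(u_n,u)$ and to $(u_nh^{1/p},uh^{1/p})$ --- converts this into strong convergence of the potentials via Hardy--Littlewood--Sobolev, uses Lemma \ref{bogachev} for the weak limits, and disposes of the leftover term $\int_{\R^N}(I_\alpha*|v_n|^p)|v_n|^{p-2}v_nh$ by H\"older together with the weak convergence of $|v_n|^{\frac{2N(p-1)}{N+\alpha}}$ against the fixed weight $|h|^{\frac{2N}{N+\alpha}}$. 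You never subtract $u$ at all: you pair the weakly convergent potentials $A_n\rightharpoonup A$ (Lemma \ref{bogachev} plus weak-to-weak continuity of the bounded Riesz operator) with the strongly convergent factor $B_nh\to Bh$, obtained from Vitali's theorem. Both arguments rest on the same phenomenon --- the fixed $h\in L^{\frac{2Np}{N+\alpha}}$ supplies the equi-integrability and tightness that the nonlinear factor lacks --- but your implementation is shorter and more robust: it needs no reduction to $h\ge 0$ (no fractional power $h^{1/p}$ ever appears), no four-term identity, and no Brezis--Lieb lemma. In particular it sidesteps the one delicate step of the paper's proof: applying Lemma \ref{blc} with $q=p$ to $w_n=u_nh^{1/p}$ is not literally justified there, since $u_nh^{1/p}$ need not be bounded in $L^{\frac{2Np}{N+\alpha}}$, and what that application would actually yield is convergence of $|u_n|^{p-1}u_nh-|v_n|^{p-1}v_nh$, one power off from the factor $|u_n|^{p-2}u_nh$ used in the decomposition; the displayed convergence is nevertheless true (your Vitali argument proves exactly it), so the paper's proof is repairable, but your route avoids the issue altogether. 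What the paper's decomposition buys is structural uniformity with Lemma \ref{nlocbl} and the compactness analysis of Section 3, giving term-by-term limit information rather than only the limit of the full integral. Finally, your caveat about $p=1$ is accurate but harmless: both your proof and the paper's use the continuity of $t\mapsto|t|^{p-2}t$ (and the paper's last step needs the exponent $\frac{p}{p-1}$ to be finite), so both really require $p>1$, which always holds in the applications since \eqref{p} and \eqref{q} force the relevant exponents to exceed $1$.
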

\begin{proof} Using $h=h^+-h^-$, it is enough to prove our lemma for $h\geq 0$. 
Denote $v_n=u_n-u$ and observe that
\begin{equation}\label{bl1}
\begin{aligned}
\intr (\ia*|u_n|^p)|u_n|^{p-2}u_nh=& \intr [\ia*(|u_n|^p-|v_n|^p)](|u_n|^{p-2}u_nh-|v_n|^{p-2}v_nh)\\
&+\intr [\ia*(|u_n|^p-|v_n|^p)]|v_n|^{p-2}v_nh\\
&+\intr [\ia*(|u_n|^{p-2}u_nh-|v_n|^{p-2}v_n h)|v_n|^p\\
&+\intr (\ia*|v_n|^p)|v_n|^{p-2}v_nh.
\end{aligned}
\end{equation}
Apply Lemma \ref{blc} with $q=p$, $r=\frac{2Np}{N+\alpha}$ by taking respectively $(w_n,w)=(u_n,u)$ and then $(w_n,w)=(u_nh^{1/p}, u h^{1/p})$. We find
$$
\left\{
\begin{aligned}
&|u_n|^p-|v_n|^p\to |u|^p \\
&|u_n|^{p-2}u_nh-|v_n|^{p-2}v_nh\to |u|^{p-2}uh
\end{aligned}
\right.
\quad\mbox{ strongly in }\; L^{\frac{2N}{N+\alpha}}(\R^N).
$$
Using now the Hardy-Littlewood-Sobolev inequality we obtain
\begin{equation}\label{est00}
\left\{
\begin{aligned}
&\ia*(|u_n|^p-|v_n|^p)\to \ia*|u|^p \\
&\ia*(|u_n|^{p-2}u_nh-|v_n|^{p-2}v_nh)\to \ia*(|u|^{p-2}uh)
\end{aligned}
\right.
\quad\mbox{ strongly in }\; L^{\frac{2N}{N-\alpha}}(\R^N).
\end{equation}
Also, by Lemma \ref{bogachev} we have
\begin{equation}\label{est01}
|u_n|^{p-2}u_n h\rightharpoonup |u|^{p-2}uh,\; |v_n|^p\rightharpoonup 0,\; |v_n|^{p-2}v_nh\rightharpoonup 0\quad \mbox{ weakly in }\; L^{\frac{2N}{N+\alpha}}(\R^N).
\end{equation}
Combining \eqref{est00}-\eqref{est01} we find
\begin{equation}\label{est02}
\left\{
\begin{aligned}
&\lim_{n\to \infty} \intr [\ia*(|u_n|^p-|v_n|^p)](|u_n|^{p-2}u_nh-|v_n|^{p-2}v_n h)=\int_{\R^N} (I_\alpha*|u|^p)|u|^{p-2}uh,\\
&\lim_{n\to \infty} \intr [\ia*(|u_n|^p-|v_n|^p)]|v_n|^{p-2}v_nh=0,\\
&\lim_{n\to \infty} \intr [\ia*(|u_n|^{p-2}u_nh-|v_n|^{p-2}v_nh)|v_n|^p=0.
\end{aligned}
\right.
\end{equation}
By H\"older's inequality and Hardy-Littlewood-Sobolev with we have
\begin{equation}\label{est03}
\begin{aligned}
\left| \intr (\ia*|v_n|^{p}))|v_n|^{p-2}v_nh \right|
&\leq \|v_n\|^p_{\frac{2Np}{N+\alpha}}\||v_n|^{p-1}h\|_{\frac{2N}{N+\alpha}}\\
&\leq C \||v_n|^{p-1}h\|_{\frac{2N}{N+\alpha}}.
\end{aligned}
\end{equation}
On the other hand, by Lemma \ref{bogachev} we have $v_n^{\frac{2N(p-1)}{N+\alpha}}\rightharpoonup 0$ weakly in 
$L^{\frac{p}{p-1}}(\R^N)$ so 
$$
\||v_n|^{p-1}h\|_{\frac{2N}{N+\alpha}}=\left(\intr |v_n|^{\frac{2N(p-1)}{N+\alpha}}|h|^{\frac{2N}{N+\alpha}}  \right)^{\frac{N+\alpha}{2N}}\to 0.
$$
Thus, from \eqref{est03}  have
\begin{equation}\label{est04}
\lim_{n\to \infty} \intr (\ia*|v_n|^{p}))|v_n|^{p-2}v_nh=0.
\end{equation}
Passing to the limit in \eqref{bl1}, from \eqref{est02} and \eqref{est04} we reach the conclusion.
\end{proof}

\bigskip

\section{Proof of Theorem \ref{gstate}}
In this section, we discuss the existence of a groundstate solutions to \eqref{fr} under the assumption $\lambda> 0$.  For $u,v\in X_{V}^{s}(\R^{N})$  we have
\begin{equation*}
\begin{aligned}
\langle {\mathcal E}_\lambda'(u),v \rangle &= \int_{{\mathbb R}^N}\!\!\!\int_{{\mathbb R}^N}\frac{(u(x)-u(y))(v(x)-v(y))}{|x-y|^{N+2s}}dxdy+\int_{\R^{N}}V(x)uv \\
&-\int_{\R^{N}}(I_{\alpha}*|u|^{p})|u|^{p-1}v -\lambda\int_{\R^{N}}(I_{\beta}*|u|^{q})|u|^{q-1}v.
\end{aligned}
\end{equation*}
So, for $t>0$ we have
$$
\langle {\mathcal E}_\lambda'(tu),tu \rangle= t^{2}\|u\|_{X_{V}^{s}}^{2}-t^{2p}\int_{\R^{N}}(I_{\alpha}*|u|^{p})|u|^{p}-\lambda t^{2q}\int_{\R^{N}}(I_{\beta}*|u|^{q})|u|^{q}
$$
Since $p>q>1$, the equation
$$
\langle {\mathcal E}_\lambda'(tu),tu \rangle= 0,
$$
has a unique positive solution $t=t(u)$ and the corresponding element $tu\in {\mathcal N_\lambda}$ is called the {\it projection of $u$} on ${\mathcal N_\lambda}$. The next result presents the main properties of the Nehari manifold ${\mathcal N_\lambda}$ which we use in this paper.

\begin{lemma}\label{nehari}

\begin{enumerate}
\item[(i)] ${\mathcal E}_\lambda \!\mid_{\mathcal N_\lambda}$ is bounded from below by a positive constant;
\item[(ii)] Any critical point $u$ of ${\mathcal E}_\lambda \!\mid_{\mathcal N_\lambda}$ is a free critical point.
\end{enumerate}
\end{lemma}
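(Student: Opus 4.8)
The plan is to exploit the fact that on $\mathcal{N}_\lambda$ the quadratic term and the two nonlocal terms are locked together by a single scalar identity, and then to treat $\mathcal{N}_\lambda$ as a regular level set of the constraint map $G(u):=\langle {\mathcal E}_\lambda'(u),u\rangle$. Throughout I would write
$$
A(u)=\int_{\R^N}(I_{\alpha}*|u|^{p})|u|^{p}, \qquad B(u)=\int_{\R^N}(I_{\beta}*|u|^{q})|u|^{q},
$$
so that $u\in\mathcal{N}_\lambda$ means precisely $\|u\|_{X_V^s}^2=A(u)+\lambda B(u)$. Substituting this relation into \eqref{fr1} gives the manifold representation
$$
{\mathcal E}_\lambda(u)=\Big(\tfrac12-\tfrac1{2p}\Big)A(u)+\lambda\Big(\tfrac12-\tfrac1{2q}\Big)B(u)\qquad\text{on }\mathcal{N}_\lambda,
$$
in which both coefficients are strictly positive because $p>q>1$ and $\lambda>0$, and $A(u),B(u)\ge 0$.

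For (i) I would first secure a uniform lower bound on the norm over $\mathcal{N}_\lambda$. Applying the Hardy-Littlewood-Sobolev inequality \eqref{hli} to $A(u)$ with $r=t=\frac{2N}{N+\alpha}$ and then the embedding $X_V^s(\R^N)\hookrightarrow L^{\frac{2Np}{N+\alpha}}(\R^N)$, which is licit since \eqref{p} places $\frac{2Np}{N+\alpha}$ in $[2,2_s^*]$, yields $A(u)\le C_1\|u\|_{X_V^s}^{2p}$, and analogously $B(u)\le C_2\|u\|_{X_V^s}^{2q}$ from \eqref{q}. On $\mathcal{N}_\lambda$ this gives $\|u\|_{X_V^s}^2\le C_1\|u\|_{X_V^s}^{2p}+\lambda C_2\|u\|_{X_V^s}^{2q}$; dividing by $\|u\|_{X_V^s}^2>0$ and using $2p-2,\,2q-2>0$ forces $\|u\|_{X_V^s}\ge\rho$ for some $\rho>0$ independent of $u$. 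Since $\frac12-\frac1{2p}>\frac12-\frac1{2q}>0$, the manifold representation is bounded below by $(\frac12-\frac1{2q})\big(A(u)+\lambda B(u)\big)=(\frac12-\frac1{2q})\|u\|_{X_V^s}^2\ge(\frac12-\frac1{2q})\rho^2>0$, which proves (i).

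For (ii) I would argue by Lagrange multipliers. Differentiating $G$ and pairing with $u$ gives $\langle G'(u),u\rangle=2\|u\|_{X_V^s}^2-2pA(u)-2q\lambda B(u)$, and inserting the Nehari identity produces, for $u\in\mathcal{N}_\lambda$,
$$
\langle G'(u),u\rangle=2(1-p)A(u)+2\lambda(1-q)B(u).
$$
Both coefficients are strictly negative since $p,q>1$, and $A(u),B(u)\ge0$; were this quantity to vanish, we would obtain $A(u)=B(u)=0$ and hence $\|u\|_{X_V^s}^2=A(u)+\lambda B(u)=0$, contradicting $u\neq0$. Thus $\langle G'(u),u\rangle<0$ on $\mathcal{N}_\lambda$, so in particular $G'(u)\neq0$, $\mathcal{N}_\lambda$ is a $C^1$ manifold, and any constrained critical point $u$ satisfies ${\mathcal E}_\lambda'(u)=\mu\,G'(u)$ for some $\mu\in\R$. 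Pairing with $u$ and using $\langle{\mathcal E}_\lambda'(u),u\rangle=0$ yields $\mu\langle G'(u),u\rangle=0$, whence $\mu=0$ and ${\mathcal E}_\lambda'(u)=0$.

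The HLS and Sobolev estimates are routine; the only delicate point is the sign analysis of $\langle G'(u),u\rangle$, which does double duty, certifying both that $\mathcal{N}_\lambda$ is a genuine manifold so that the multiplier rule applies, and that the multiplier must vanish. The hypothesis $p,q>1$ is precisely what makes the energy coefficients positive and the derivative coefficients negative, so no borderline degeneracy arises.
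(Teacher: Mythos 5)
Your proposal is correct and follows essentially the same route as the paper: the Hardy--Littlewood--Sobolev inequality plus the Sobolev embeddings give the uniform lower bound $\|u\|_{X_V^s}\geq \rho>0$ on ${\mathcal N}_\lambda$, the energy restricted to the constraint is then bounded below by $\big(\tfrac12-\tfrac1{2q}\big)\|u\|_{X_V^s}^2$, and the Lagrange multiplier is killed by the strict negativity of $\langle G'(u),u\rangle$ on ${\mathcal N}_\lambda$. The only cosmetic difference is in part (ii): you eliminate $\|u\|_{X_V^s}^2$ via the Nehari identity to get $2(1-p)A(u)+2\lambda(1-q)B(u)$, whereas the paper eliminates $\lambda B(u)$ to get $2(1-q)\|u\|_{X_V^s}^2-2(p-q)A(u)\leq -2(q-1)C_0^2$, which additionally yields a uniform (not just pointwise) negative bound, though that extra uniformity is not needed for the conclusion.
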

\begin{proof} (i) Using the continuous embeddings  $X_{V}^{s}(\R^N) \hookrightarrow L^{\frac{2Np}{N+\alpha}}(\R^N)$ and  $X_{V}^{s}(\R^N) \hookrightarrow L^{\frac{2Nq}{N+\beta}}(\R^N)$ together with Hardy-Littlewood-Sobolev inequality, for any $u\in {\mathcal N_\lambda}$ we have
$$
\begin{aligned}
0=\langle {\mathcal E}_\lambda'(u),u\rangle& =\|u\|_{X_{V}^{s}}^2-\int_{\R^{N}}(I_{\alpha}*|u|^{p})|u|^{p}-\lambda \int_{\R^{N}}(I_{\beta}*|u|^{q})|u|^{q}\\
&\geq \|u\|_{X_{V}^{s}}^2-C\|u\|_{X_{V}^{s}}^{2p}-C_\lambda \|u\|_{X_{V}^{s}}^{2q}.
\end{aligned}
$$
Therefore, there exists $C_0>0$ such that
\begin{equation}\label{cnot}
\|u\|_{X_{V}^{s}}\geq C_0>0\quad\mbox{for all }u\in {\mathcal N_\lambda}.
\end{equation}
Using the above fact we have
$$
\begin{aligned}
{\mathcal E}_\lambda(u)&= {\mathcal E}_\lambda(u)-\frac{1}{2q}\langle {\mathcal E}_\lambda'(u), u \rangle \\
&=\Big(\frac{1}{2}-\frac{1}{2q}\Big)\|u\|_{X_{V}^{s}}^2+\Big(\frac{1}{2q}-\frac{1}{2p}\Big)\int_{\R^{N}} (I_{\alpha}*|u|^{p})|u|^{p} \\
&\geq \Big(\frac{1}{2}-\frac{1}{2q}\Big)\|u\|_{X_{V}^{s}}^2 \\
&\geq \Big(\frac{1}{2}-\frac{1}{2q}\Big) C_0^2>0.
\end{aligned}
$$

(ii) Let $ {\mathcal L}(u)=\langle {\mathcal E}_\lambda'(u),u\rangle $ for $u \in X_{V}^{s}(\R^N)$. Now, for  $u\in {\mathcal N_\lambda}$, from \eqref{cnot} we get
\begin{equation}\label{cnot1}
\begin{aligned}
\langle {\mathcal L}'(u),u\rangle&=2\|u\|^2-2p\int_{\R^{N}}(I_{\alpha}*|u|^{p})|u|^{p}-2q \lambda\int_{\R^{N}}(I_{\beta}*|u|^{q})|u|^{q}\\
&=2(1-q)\|u\|_{X_{V}^{s}}^2-2(p-q)\int_{\R^{N}}(I_{\alpha}*|u|^{p})|u|^{p}\\
&\leq -2(q-1)\|u\|_{X_{V}^{s}}^2\\
&<-2(q-1)C_0.
\end{aligned}
\end{equation}
Assuming that $u\in {\mathcal N_\lambda}$ is a critical point of ${\mathcal E}_\lambda \!\mid_{\mathcal N_\lambda}$ and using Lagrange multiplier theorem, there exists $\mu \in \R$ such that
${\mathcal E}_\lambda'(u)=\mu {\mathcal L}'(u)$. 
In particular $\langle {\mathcal E}_\lambda '(u),u\rangle=\mu \langle {\mathcal L}'(u),u\rangle$. As $\langle {\mathcal L}'(u),u\rangle<0$, this implies $\mu=0$ so ${\mathcal E}_\lambda '(u)=0$.

\end{proof}

\subsection{Compactness}\label{compc}
Define
$$
{\mathcal J}:X_{V}^{s}(\R^N)\to \R,\quad {\mathcal J}(u)=\frac{1}{2}\|u\|^{2}-\frac{1}{2p}\intr (I_\alpha*|u|^p)|u|^p.
$$
For all $\phi \in C^{\infty}_{0}(\R^N)$, we have
$$
\langle {\mathcal J}'(u), \phi \rangle= \int_{{\mathbb R}^N}\!\!\!\int_{{\mathbb R}^N}\frac{(u(x)-u(y))(\phi(x)-\phi(y))}{|x-y|^{N+2s}}dxdy+\int_{\R^{N}}V(x)u\phi \\
-\int_{\R^{N}}(I_{\alpha}*|u|^{p})|u|^{p-1}\phi,
$$
and
\begin{equation*}
\langle {\mathcal J}'(u),u\rangle= \|u\|_{X_{V}^{s}}^{2}-\int_{\R^{N}}(I_{\alpha}*|u|^{p})|u|^{p}.
\end{equation*}

Also, consider the Nehari manifold associated with ${\mathcal J}$ as
$$
{\mathcal N}_{{\mathcal J}}=\{u\in X_{V}^{s}(\R^N)\setminus\{0\}: \langle {\mathcal J}'(u),u\rangle=0\},
$$
and let 
$$
m_{\mathcal J}=\inf_{u\in {\mathcal N}_{\mathcal J}}{\mathcal J}(u).
$$

\begin{lemma}\label{compact}
Let $(u_n)\subset{\mathcal N}_{\mathcal J}$ be a $(PS)$ sequence of ${\mathcal E}_\lambda \!\mid_{{\mathcal N}_{\lambda}}$, that is,
\begin{enumerate}
\item[(a)] $({\mathcal E}_\lambda(u_n))$ is bounded;
\item[(b)] ${\mathcal E}_\lambda'\!\mid_{{\mathcal N}_{\lambda}}(u_n)\to 0$ strongly in $X_{V}^{-s}(\R^N)$.
\end{enumerate}
Then, there exists a solution $u\in X_{V}^{s}(\R^N)$ of \eqref{fr} such that replacing $(u_n)$ with a subsequence then one of the following alternative holds

\smallskip

\noindent (A) either $u_n\to u$ strongly in $X_{V}^{s}(\R^N)$;

or
\smallskip

\noindent (B) $u_n\rightharpoonup u$ weakly in $X_{V}^{s}(\R^N)$ and there exists 
a positive integer $k\geq 1$ and $k$ functions $u_1,u_2,\dots, u_k\in X_{V}^{s}(\R^N)$ which are nontrivial  weak solutions to \eqref{squ} and $k$ sequences of points $(z_{n,1})$, $(z_{n,2})$, $\dots$, $(z_{n,k})\subset \R^N$
such that:
\begin{enumerate}
\item[(i)] $|z_{n,j}|\to \infty$ and $|z_{n,j}-z_{n,i}|\to \infty$  if $i\neq j$, $n\to \infty$;
\item[(ii)] $\di u_n-\sum_{j=1}^ku_j(\cdot+z_{n,j})\to u$ in $X_{V}^{s}(\R^N)$;
\item[(iii)] $\di {\mathcal E}_\lambda(u_n)\to {\mathcal E}_{\lambda}(u)+\sum_{j=1}^k {\mathcal J}(u_j)$.
\end{enumerate}
\end{lemma}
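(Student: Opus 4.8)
The plan is to carry out a Struwe-type profile decomposition, peeling off ``bubbles'' that escape to infinity, with the nonlocal Brezis--Lieb lemmas of Section~2 playing the role of the classical pointwise Brezis--Lieb lemma. First I would establish boundedness and the free Palais--Smale property. On the Nehari manifold the constraint $\langle \mathcal{E}_\lambda'(u_n),u_n\rangle=o(1)$ together with $q>1$ yields, exactly as in the proof of Lemma~\ref{nehari}(i), a coercive lower bound
$$
\mathcal{E}_\lambda(u_n)\ge\Big(\tfrac12-\tfrac1{2q}\Big)\|u_n\|_{X_V^s}^2+o(\|u_n\|_{X_V^s}),
$$
so $(u_n)$ is bounded in $X_V^s(\R^N)$; the Lagrange-multiplier computation of Lemma~\ref{nehari}(ii) then upgrades the restricted condition~(b) to $\mathcal{E}_\lambda'(u_n)\to 0$ in $X_V^{-s}(\R^N)$. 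Passing to a subsequence, $u_n\rightharpoonup u$ and $u_n\to u$ a.e.; inserting a fixed $\phi\in C_0^\infty(\R^N)$ into $\langle \mathcal{E}_\lambda'(u_n),\phi\rangle=o(1)$ and passing to the limit in both nonlocal nonlinearities by Lemma~\ref{anbl} (once with $(\alpha,p)$, once with $(\beta,q)$) shows that $u$ is a weak solution of \eqref{fr}.

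The next step is the first-level splitting. Setting $v_n=u_n-u\rightharpoonup 0$, the Hilbert identity for $\|\cdot\|_{X_V^s}^2$ and the nonlocal Brezis--Lieb Lemma~\ref{nlocbl} (for the $\alpha$- and the $\beta$-energies) decouple the energy as $\mathcal{E}_\lambda(u_n)=\mathcal{E}_\lambda(u)+\mathcal{E}_\lambda(v_n)+o(1)$, while Lemma~\ref{anbl} gives $\mathcal{E}_\lambda'(v_n)=o(1)$, so $(v_n)$ is itself a Palais--Smale sequence with $v_n\rightharpoonup 0$. I would then apply the vanishing/non-vanishing dichotomy through Lemma~\ref{cc}. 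If $\sup_{y}\int_{B_1(y)}|v_n|^r\to 0$ for the exponents $r=\tfrac{2Np}{N+\alpha}$ and $r=\tfrac{2Nq}{N+\beta}$ (both lying in $(2,2_s^*)$ by \eqref{p}--\eqref{q}), then Lemma~\ref{cc} forces $v_n\to 0$ in the corresponding $L^r$ spaces, the nonlinear terms vanish, and $\langle \mathcal{E}_\lambda'(v_n),v_n\rangle=o(1)$ gives $\|v_n\|_{X_V^s}\to 0$, which is alternative~(A). Otherwise there are $z_{n,1}\in\R^N$ and $\delta>0$ with $\int_{B_1(z_{n,1})}|v_n|^r\ge\delta$; since $v_n\rightharpoonup 0$ these centres must escape, $|z_{n,1}|\to\infty$, and along a further subsequence $v_n(\cdot+z_{n,1})\rightharpoonup u_1\neq 0$.

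To identify the profile I would translate the Palais--Smale identity by $z_{n,1}$ and pass to the limit: because $|z_{n,1}|\to\infty$ the fixed weak limit $u$ and the $x$-dependence of the potential decouple from any fixed test window, so $u_1$ emerges, again via Lemma~\ref{anbl}, as a nontrivial weak solution of the autonomous limit equation \eqref{squ}. Iterating on $v_n^{(2)}=v_n-u_1(\cdot-z_{n,1})$, which satisfies the same Brezis--Lieb decoupling and Palais--Smale property, produces further profiles $u_2,u_3,\dots$ with pairwise escaping centres, $|z_{n,i}-z_{n,j}|\to\infty$. Each $u_j$ lies on $\mathcal{N}_{\mathcal{J}}$, hence $\mathcal{J}(u_j)\ge m_{\mathcal{J}}>0$ by the analogue of Lemma~\ref{nehari}(i); since the decoupling splits the (bounded) energy and, once the perturbation contribution of the escaping mass is shown to vanish (see below), gives $\mathcal{E}_\lambda(u_n)=\mathcal{E}_\lambda(u)+\sum_j\mathcal{J}(u_j)+o(1)$ with a nonnegative remainder, the extraction must terminate after finitely many steps $k$, the final remainder converging strongly to zero. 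This yields (i)--(iii).

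The step I expect to be the crux is the decoupling of the \emph{nonlocal} interactions. Because the Riesz kernel $I_\gamma$ is long range, one cannot argue pointwise as in the local case; one must instead show that the cross term $\intr (I_\alpha*|u|^p)\,|u_1(\cdot-z_{n,1})|^p$ coupling the weak limit to an escaping profile, and, at the iterative stage, the mixed terms $\intr (I_\alpha*|u_i(\cdot-z_{n,i})|^p)\,|u_j(\cdot-z_{n,j})|^p$ between distinct profiles, genuinely vanish. This is precisely what the nonlocal Brezis--Lieb Lemmas~\ref{nlocbl} and \ref{anbl} are designed to control, in conjunction with $|z_{n,j}|\to\infty$ and $|z_{n,i}-z_{n,j}|\to\infty$. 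A companion delicacy is checking that the perturbation and the potential do not contribute to the escaping profiles, so that each $u_j$ solves \eqref{squ} rather than the full equation; this is where the structural hypotheses on $V$ and the ordering $p>q$ must be used with care.
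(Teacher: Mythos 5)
Your plan reproduces the paper's proof essentially step for step: the weak limit is identified as a solution of \eqref{fr} via Lemma \ref{anbl}; the norm and the nonlocal terms are split along $v_n=u_n-u$ by the nonlocal Brezis--Lieb Lemma \ref{nlocbl}; the vanishing/non-vanishing dichotomy runs through Lemma \ref{cc}; a nontrivial profile is extracted after translation; and the iteration terminates because each profile costs at least $m_{\mathcal J}>0$. In places you are more careful than the paper (explicit boundedness of $(u_n)$, the Lagrange-multiplier upgrade of the constrained (PS) condition to $\mathcal{E}_\lambda'(u_n)\to 0$ in $X_V^{-s}(\R^N)$, and the use of both exponents $\frac{2Np}{N+\alpha}$ and $\frac{2Nq}{N+\beta}$ in the dichotomy).

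There is, however, a genuine gap, located exactly at the step you call a ``companion delicacy'' and then defer: the claim that the escaping mass carries no $\beta$-energy, i.e.\ that $\int_{\R^N}(I_\beta*|v_n|^q)|v_n|^q\to 0$. This is what converts $\mathcal{E}_\lambda(v_n)$ into $\mathcal{J}(v_n)$, makes each bubble $u_j$ solve \eqref{squ} rather than the full equation \eqref{fr}, and yields the quantization by $\mathcal{J}(u_j)$ in (iii); without it the lemma as stated is not proved. The tools you invoke cannot supply it: Lemma \ref{nlocbl} gives only the additive splitting $\int(I_\beta*|u_n|^q)|u_n|^q=\int(I_\beta*|u|^q)|u|^q+\int(I_\beta*|v_n|^q)|v_n|^q+o(1)$ and says nothing about the size of the remainder, while the $\beta$-interaction is translation invariant with \emph{constant} coupling $\lambda$, so a bubble $u_1(\cdot-z_{n,1})$ escaping to infinity retains the fixed positive amount $\int_{\R^N}(I_\beta*|u_1|^q)|u_1|^q$; neither (V1), nor $\lambda>0$, nor $p>q$ removes it. Contrast this with the Schr\"odinger--Poisson setting of \cite{CM2016,CV2010}, which this decomposition imitates, where the nonlocal term carries a coefficient decaying at infinity. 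A concrete test case: for $V\equiv V_0$ constant (allowed by (V1)) and $Q$ a nontrivial critical point of $\mathcal{E}_\lambda$, the sequence $u_n=Q(\cdot-z_n)$ with $|z_n|\to\infty$ is a (PS) sequence whose only possible profile is $Q$ itself; for alternative (B) to hold, $Q$ would have to solve \eqref{squ} as well, forcing $\lambda(I_\beta*|Q|^q)|Q|^{q-2}Q\equiv 0$, i.e.\ $Q\equiv 0$. Relatedly, \eqref{squ} is not the ``autonomous limit equation'' you describe: it contains $V(x)$, so translates of a fixed profile cannot be expected to solve it unless $V$ is essentially constant. You should know that the paper's own proof jumps over the identical point --- in passing to \eqref{est6} the remainder's $\beta$-energy is silently discarded --- so your proposal is a faithful reconstruction of the paper's route; but as a proof, the one step that is specific to the nonlocal perturbation, and on which the conclusion hinges, is missing from your argument (and from the paper's).
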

\begin{proof}
Since $(u_n)$ is bounded in $X_{V}^{s}(\R^N)$, there exists $u\in X_{V}^{s}(\R^N)$ such that, up to a subsequence, we have
\begin{equation}\label{firstconv}
\left\{
\begin{aligned}
u_n& \rightharpoonup u \quad\mbox{ weakly in }X_{V}^{s}(\R^N),\\
u_n &\rightharpoonup u\quad\mbox{ weakly in }L^r(\R^N),\; 2\leq r\leq 2_{s}^*,\\
u_n & \to u\quad\mbox{ a.e. in }\R^N.
\end{aligned}
\right.
\end{equation}

Using \eqref{firstconv} and Lemma \ref{anbl} it follows that ${\mathcal E}_\lambda'(u)=0$, so $u\in X_{V}^{s}(\R^N)$ is a solution of \eqref{fr}. Further, if $u_n\to u$ strongly in $X_{V}^{s}(\R^N)$ then $(A)$ in Lemma \ref{compact} holds.

\medskip

Now, assume that $(u_n)$ does not converge strongly to $u$ in $X_{V}^{s}(\R^N)$ and set
$w_{n,1}=u_n-u$. Then $(w_{n,1})$ converges weakly to zero in $X_{V}^{s}(\R^N)$ and
\begin{equation}\label{bl2}
\|u_n\|_{X_{V}^{s}}^2=\|u\|_{X_{V}^{s}}^2+\|w_{n,1}\|_{X_{V}^{s}}^2+o(1).
\end{equation}
By Lemma \ref{nlocbl} we have
\begin{equation}\label{bl3}
\int_{\R^N} (I_\alpha*|u_n|^p)|u_n|^p=\intr (I_\alpha*|u|^p)|u|^p+\intr (I_\alpha*|w_{n,1}|^p)|w_{n,1}|^p+o(1). 
\end{equation}
 Using \eqref{bl2} and \eqref{bl3} we get
\begin{equation}\label{est6}
{\mathcal E}_\lambda(u_n)= {\mathcal E}_\lambda(u)+{\mathcal J}(w_{n,1})+o(1).
\end{equation}
Further, for any $h\in X_{V}^{s}(\R^{N})$, by Lemma \ref{anbl}  we have
\begin{equation}\label{est7}
\langle{\mathcal J}'(w_{n,1}), h\rangle=o(1).
\end{equation}
From Lemma \ref{nlocbl} we deduce that
$$
\begin{aligned}
0=\langle {\mathcal E}_\lambda'(u_n), u_n \rangle&=\langle {\mathcal E}_\lambda'(u),u\rangle+\langle {\mathcal J}'(w_{n,1}), w_{n,1} \rangle+o(1)\\
&=\langle{\mathcal J}'(w_{n,1}), w_{n,1}\rangle+o(1).
\end{aligned}
$$
This implies
\begin{equation}\label{est8}
\langle {\mathcal J}'(w_{n,1}), w_{n,1}\rangle=o(1).
\end{equation}
We need the following auxillary result:
\begin{lemma}\label{c1}
Define
$$
\delta:=\limsup_{n\to \infty}\Big(\sup_{w\in \R^N} \int_{B_1(z)}|w_{n,1}|^{\frac{2Np}{N+\alpha}}\Big).
$$
Then $\delta>0$.
\end{lemma}
\begin{proof}
Assume by contradiction $\delta= 0$. By Lemma \ref{cc} we deduce that $w_{n,1}\to 0$ strongly in $L^{\frac{2Np}{N+\alpha}}(\R^N)$. Then, by Hardy-Littlewood-Sobolev inequality we get
$$
\int_{\R^N} (I_\alpha*|w_{n,1}|^p)|w_{n,1}|^p=o(1).
$$
Using this fact together with \eqref{est8}, we get $w_{n,1}\to 0$ strongly in $X_{V}^{s}(\R^{N})$. This is a contradiction. Hence, $\delta> 0$.
\end{proof}
Now, we return to the proof of Lemma \ref{compact}. Since $\delta>0$, we may find $z_{n,1}\in \R^N$ such that
\begin{equation}\label{est9}
\int_{B_1(z_{n,1})}|w_{n,1}|^{\frac{2Np}{N+\alpha}}>\frac{\delta}{2}.
\end{equation}
Consider the sequence $(w_{n,1}(\cdot+z_{n,1}))$. Then there exists $u_1\in X_{V}^{s}(\R^{N})$ such that, up to a subsequence, we have  
$$
\begin{aligned}
w_{n,1}(\cdot+z_{n,1})&\rightharpoonup u_1\quad\mbox{ weakly in } X_{V}^{s}(\R^{N}),\\
w_{n,1}(\cdot+z_{n,1})&\to u_1\quad\mbox{ strongly in } L_{loc}^{\frac{2Np}{N+\alpha}}(\R^N),\\
w_{n,1}(\cdot+z_{n,1})&\to u_1\quad\mbox{ a.e. in } \R^N.
\end{aligned}
$$
Next, passing to the limit in \eqref{est9} we get
$$
\int_{B_1(0)}|u_{1}|^{\frac{2Np}{N+\alpha}}\geq \frac{\delta}{2},
$$
therefore, $u_1\not\equiv 0$. Since $(w_{n,1})$ converges weakly to zero in $X_{V}^{s}(\R^{N})$  it follows that $(z_{n,1})$ is unbounded and then passing to a subsequence we may assume that $|z_{n,1}|\to \infty$. By \eqref{est8} we deduce that ${\mathcal J}'(u_1)=0$, so $u_1$ is a nontrivial solution of \eqref{squ}.

Further, define
$$
w_{n,2}(x)=w_{n,1}(x)-u_1(x-z_{n,1}).
$$
Similarly as before we have
$$
\|w_{n,1}\|^2=\|u_1\|^2+\|w_{n,2}\|^2+o(1).
$$
and then using Lemma \ref{nlocbl} we deduce that
$$
\int_{\R^N} (I_\alpha*|w_{n,1}|^p)|w_{n,1}|^p=\int_{\R^N} (I_\alpha*|u_1|^p)|u_1|^p+\intr (I_\alpha*|w_{n,2}|^p)|w_{n,2}|^p+o(1). 
$$
Hence,
$$
{\mathcal J}(w_{n,1})={\mathcal J}(u_1)+{\mathcal J}(w_{n,2})+o(1).
$$
So, by \eqref{est6} one can get
$$
{\mathcal E}_\lambda (u_n)= {\mathcal E}_\lambda (u)+{\mathcal J}(u_1)+{\mathcal J}(w_{n,2})+o(1).
$$
Using the above techniques, we also obtain
$$
\langle {\mathcal J}'(w_{n,2}),h\rangle =o(1)\quad\mbox{ for any }h\in X_{V}^{s}(\R^{N})
$$
and
$$
\langle {\mathcal J}'(w_{n,2}),w_{n,2}\rangle =o(1).
$$
Now, if $(w_{n,2})$ converges strongly to zero, then we finish the proof by taking $k=1$ in the statement of Lemma \ref{compact}. If $w_{n,2}\rightharpoonup 0$ weakly and not strongly in $X_{V}^{s}(\R^{N})$, then we iterate the process. In $k$ number of steps one could find a set of sequences $(z_{n,j})\subset \R^N$, $1\leq j\leq k$ with 
$$
|z_{n,j}|\to \infty\quad\mbox{  and }\quad |z_{n,i}-z_{n,j}|\to \infty\quad\mbox{  as }\; n\to \infty, i\neq j
$$
and $k$ nontrivial solutions  $u_1$, $u_2$, $\dots$, $u_k\in X_{V}^{s}(\R^{N})$ of \eqref{squ} such that, denoting 
$$
w_{n,j}(x):=w_{n,j-1}(x)-u_{j-1}(x-z_{n,j-1})\,, \quad 2\leq j\leq k,
$$ 
we have
$$
w_{n,j}(x+z_{n,j})\rightharpoonup u_j\quad\mbox{weakly in }\; X_{V}^{s}(\R^{N})
$$
and
$$
{\mathcal E}_\lambda(u_n)= {\mathcal E}_\lambda(u)+\sum_{j=1}^k {\mathcal J}(u_j)+{\mathcal J}(w_{n,k})+o(1).
$$
As ${\mathcal E}_\lambda (u_n)$ is bounded and ${\mathcal J}(u_j)\geq m_{\mathcal J}$, we can iterate the process only a finite number of times and which concludes our proof.
\end{proof}

\begin{cor}\label{corr1} 
For $c\in (0,m_{\mathcal J})$, any $(PS)_c$ sequence of ${\mathcal E}_\lambda \! \mid_{{\mathcal N}_\lambda} $ is relatively compact. 
\end{cor}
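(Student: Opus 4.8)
The plan is to invoke the structural dichotomy of Lemma~\ref{compact} and to rule out the ``bubbling'' alternative (B) by an energy-counting argument driven by the strict bound $c<m_{\mathcal J}$. Since essentially all the analytic work (the Brezis--Lieb splittings, the profile decomposition) has already been absorbed into Lemma~\ref{compact}, the corollary should reduce to level comparison.

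First I would take the given $(PS)_c$ sequence $(u_n)\subset{\mathcal N}_\lambda$ and apply Lemma~\ref{compact}. This yields a weak limit $u\in X_V^s(\R^N)$ solving \eqref{fr}, and exactly one of the two alternatives holds. If alternative (A) occurs, then $u_n\to u$ strongly in $X_V^s(\R^N)$ and relative compactness is immediate. Hence the entire task is to show that alternative (B) is incompatible with $c<m_{\mathcal J}$.

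So suppose (B) holds with $k\geq 1$ bubbles $u_1,\dots,u_k$. Two lower bounds are the key inputs. On the one hand, each $u_j$ is a \emph{nontrivial} weak solution of \eqref{squ}, so ${\mathcal J}'(u_j)=0$, whence $\langle{\mathcal J}'(u_j),u_j\rangle=0$ and $u_j\in{\mathcal N}_{\mathcal J}$; by definition of $m_{\mathcal J}$ this forces ${\mathcal J}(u_j)\geq m_{\mathcal J}$ for every $j$. On the other hand, the weak limit $u$ is a critical point of ${\mathcal E}_\lambda$, so either $u=0$, in which case ${\mathcal E}_\lambda(u)=0$, or $u\in{\mathcal N}_\lambda$, in which case the same computation as in Lemma~\ref{nehari}(i), namely
\[
{\mathcal E}_\lambda(u)={\mathcal E}_\lambda(u)-\tfrac{1}{2q}\langle{\mathcal E}_\lambda'(u),u\rangle
=\Big(\tfrac12-\tfrac{1}{2q}\Big)\|u\|_{X_V^s}^2+\Big(\tfrac{1}{2q}-\tfrac{1}{2p}\Big)\intr (I_\alpha*|u|^p)|u|^p\geq 0,
\]
valid because $p>q>1$ and the Riesz-potential term is nonnegative, shows ${\mathcal E}_\lambda(u)\geq 0$ in all cases.

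Feeding these two bounds into the energy-splitting identity (iii) of Lemma~\ref{compact} gives
\[
c=\lim_{n\to\infty}{\mathcal E}_\lambda(u_n)={\mathcal E}_\lambda(u)+\sum_{j=1}^k{\mathcal J}(u_j)\geq 0+k\,m_{\mathcal J}\geq m_{\mathcal J},
\]
which contradicts $c<m_{\mathcal J}$. Therefore (B) cannot occur, alternative (A) must hold, and $(u_n)$ admits a strongly convergent subsequence, i.e.\ it is relatively compact. The only genuinely delicate point is confirming that the weak limit contributes a nonnegative amount of energy; once one notices that the decomposition in Lemma~\ref{nehari}(i) applies verbatim to $u$ whether or not $u$ vanishes, and that each nontrivial profile sits on ${\mathcal N}_{\mathcal J}$, the proof is a clean level-counting contradiction with no further obstacle beyond bookkeeping.
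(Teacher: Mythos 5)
Your proof is correct and follows essentially the same route as the paper: apply Lemma~\ref{compact}, observe that each bubble $u_j$ lies on ${\mathcal N}_{\mathcal J}$ so ${\mathcal J}(u_j)\geq m_{\mathcal J}$, and rule out alternative (B) because the energy identity (iii) would force $c\geq m_{\mathcal J}$. The paper's proof is just a terser version of this; your added verification that ${\mathcal E}_\lambda(u)\geq 0$ (via the Lemma~\ref{nehari}(i) computation, covering the case $u=0$ separately) is exactly the detail the paper leaves implicit.
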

\begin{proof}
Assume $(u_n)$ is a $(PS)_c$ sequence of ${\mathcal E}_\lambda \! \mid_{{\mathcal N}_\lambda}$. From Lemma \ref{compact} we have ${\mathcal J}(u_j)\geq m_{\mathcal J}$ and hence, it follows that up to a subsequence $u_n\to u$ strongly in $X_{V}^{s}(\R^{N})$ and $u$ is a solution of \eqref{fr}. 
\end{proof}
In order to finish the proof of Theorem \ref{gstate} we need the following result.
\begin{lemma}\label{flg}
$$
m_{\lambda}<m_{\mathcal J}.
$$
\end{lemma}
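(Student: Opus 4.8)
The plan is to test the infimum $m_\lambda$ against a suitably rescaled ground state of the unperturbed problem \eqref{squ}, and to exploit that for $\lambda>0$ the perturbation contributes a strictly negative term to the energy, thereby opening a gap. First I would fix a minimizer $w\in{\mathcal N}_{\mathcal J}$ of $\mathcal J$, so that ${\mathcal J}(w)=m_{\mathcal J}$ and
$$
\|w\|_{X_V^s}^2=\int_{\R^N}(I_\alpha*|w|^p)|w|^p=:A>0,\qquad \int_{\R^N}(I_\beta*|w|^q)|w|^q=:B>0,
$$
the positivity of $B$ following from $w\neq 0$. Such a minimizer is precisely a ground state of \eqref{squ} and its existence is available from the analysis of $\mathcal J$ (equivalently, from the compactness results already established, specialised to $\lambda=0$).

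Next I would project $w$ onto ${\mathcal N}_\lambda$. As in the discussion preceding Lemma \ref{nehari}, since $p>q>1$ there is a unique $t^*=t(w)>0$ with $t^*w\in{\mathcal N}_\lambda$, and the single-variable map $t\mapsto{\mathcal E}_\lambda(tw)$ attains its maximum over $(0,\infty)$ precisely at $t^*$. Consequently
$$
m_\lambda\le {\mathcal E}_\lambda(t^*w)=\max_{t>0}{\mathcal E}_\lambda(tw).
$$

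The decisive observation is the pointwise identity, valid for every $t>0$,
$$
{\mathcal E}_\lambda(tw)={\mathcal J}(tw)-\frac{\lambda t^{2q}}{2q}\,B<{\mathcal J}(tw),
$$
where the strict inequality uses $\lambda>0$, $B>0$ and $t>0$. Since $w\in{\mathcal N}_{\mathcal J}$, the function $t\mapsto{\mathcal J}(tw)=\tfrac{t^2}{2}A-\tfrac{t^{2p}}{2p}A$ is maximised at $t=1$ with value ${\mathcal J}(w)=m_{\mathcal J}$. Evaluating the previous display at $t=t^*$ and combining the two facts gives
$$
m_\lambda\le{\mathcal E}_\lambda(t^*w)={\mathcal J}(t^*w)-\frac{\lambda (t^*)^{2q}}{2q}\,B<{\mathcal J}(t^*w)\le\max_{t>0}{\mathcal J}(tw)={\mathcal J}(w)=m_{\mathcal J},
$$
which is the claim.

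The point requiring care is that the inequality is \emph{strict} and not merely ``$\le$'': this is guaranteed as soon as one works with an actual minimizer $w$, so that $t^*>0$ and $B>0$ are fixed positive numbers and the gap $\tfrac{\lambda (t^*)^{2q}}{2q}B$ is a fixed positive quantity. Working instead with a minimizing sequence $(w_n)$ would only yield $m_\lambda\le m_{\mathcal J}$, since along such a sequence the gap $\tfrac{\lambda(t^*_n)^{2q}}{2q}B_n$ could a priori degenerate to $0$ (for instance if $B_n\to 0$). Hence the only substantive ingredient behind Lemma \ref{flg} is the attainment of $m_{\mathcal J}$; granting that, the comparison above is elementary. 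I would also record the auxiliary fact $t^*\in(0,1)$, which follows since the defining relation $A=t^{2p-2}A+\lambda t^{2q-2}B$ has right-hand side equal to $A+\lambda B>A$ at $t=1$, although it is not needed for the inequality itself.
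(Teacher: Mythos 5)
Your proof is correct and uses the same test object as the paper --- the projection $t^*w$ of a ground state $w$ of \eqref{squ} onto ${\mathcal N}_\lambda$ --- but the mechanism producing the \emph{strict} inequality is genuinely different. The paper substitutes the two Nehari constraints $\|Q\|^2=A(Q)$ and $t^2\|Q\|^2=t^{2p}A(Q)+t^{2q}B(Q)$ into ${\mathcal E}_\lambda(tQ)$ so as to eliminate the $\beta$-term entirely, proves $t<1$, and then obtains strictness from the coefficient comparison
$t^{2}\bigl(\tfrac12-\tfrac{1}{2q}\bigr)+t^{2p}\bigl(\tfrac{1}{2q}-\tfrac{1}{2p}\bigr)<\bigl(\tfrac12-\tfrac{1}{2q}\bigr)+\bigl(\tfrac{1}{2q}-\tfrac{1}{2p}\bigr)$,
which needs $t<1$ together with the sign conditions $q>1$ and $p>q$. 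You instead keep the $\beta$-term, use the identity ${\mathcal E}_\lambda(tw)={\mathcal J}(tw)-\tfrac{\lambda t^{2q}}{2q}B$ with $\lambda>0$, $B>0$ to get a strictly negative gap for every $t>0$, and then bound ${\mathcal J}(t^*w)\le\max_{t>0}{\mathcal J}(tw)={\mathcal J}(w)=m_{\mathcal J}$, the maximum at $t=1$ being immediate from $w\in{\mathcal N}_{\mathcal J}$. Your route is shorter and somewhat more robust: it never needs $t^*<1$ (which you correctly identify as a by-product rather than an ingredient), it makes the size of the gap $\tfrac{\lambda(t^*)^{2q}}{2q}B$ explicit, and it isolates the one substantive input --- attainment of $m_{\mathcal J}$ by an actual minimizer, which both you and the paper import from \cite{DSS2015} --- with your remark about minimizing sequences explaining exactly why that attainment is indispensable. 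What the paper's computation buys in exchange is the explicit information $t<1$ about where the projected ground state sits, but for the purposes of Lemma \ref{flg} your argument suffices and is cleaner; both are valid under the hypotheses of Theorem \ref{gstate}.
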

\begin{proof}
Let $Q\in X_{V}^{s}(\R^{N})$ be a groundstate solution of \eqref{squ}, we know that such a groundstate exists and for that we refer the reader to \cite{DSS2015}. Denote by $tQ$ the projection of $Q$ on ${\mathcal N_\lambda}$, that is, 
$t=t(Q)>0$ is the unique real number such that $tQ\in {\mathcal N_\lambda}$. Set
$$
A(Q)=\int_{\R^N} (I_\alpha*|Q|^p)|Q|^p\,,\quad B(Q)=\lambda \int_{\R^N} (I_\beta*|Q|^p)|Q|^p.
$$
As $Q\in {\mathcal N}_{\mathcal J}$ and $tQ\in {\mathcal N_\lambda}$, we get
\begin{equation}\label{g1}
||Q||^2=A(Q)
\end{equation}
and
$$
t^2\|Q\|^2=t^{2p}A(Q)+t^{2q}B(Q).
$$
From the above equalities, one can easily deduce that $t<1$. Therefore, we have

\begin{equation*}
\begin{aligned}
m_\lambda &\leq {\mathcal E}_\lambda(tQ)=\frac{1}{2}t^{2}\|Q\|^{2}-\frac{1}{2p}t^{2p}A(Q)-\frac{1}{2q}t^{2q}B(Q)\\
&= \Big(\frac{t^{2}}{2}-\frac{t^{2p}}{2p}\Big)\|Q\|^{2}-\frac{1}{2q}\Big(t^2||Q||^2-t^{2p}A(Q)\Big)\\
&= t^{2} \Big(\frac{1}{2}-\frac{1}{2q}\Big)\|Q\|^{2}+t^{2p}\Big(\frac{1}{2q}-\frac{1}{2p}\Big)\|Q\|^{2}\\
&< \Big(\frac{1}{2}-\frac{1}{2q}\Big)\|Q\|^{2}+\Big(\frac{1}{2q}-\frac{1}{2p}\Big)\|Q\|^{2}\\
&< \Big(\frac{1}{2}-\frac{1}{2p}\Big)\|Q\|^{2} ={\mathcal J}(Q)= m_{\mathcal J}.
\end{aligned}
\end{equation*}

\end{proof}

Further, using Ekeland Variational Principle, for any $n\geq 1$ there exists $(u_n) \in {\mathcal N}_\lambda$ such that
\begin{equation*}
\begin{aligned}
{\mathcal E}_\lambda(u_n)&\leq m_\lambda+\frac{1}{n} &&\quad\mbox{ for all } n\geq 1,\\
{\mathcal E}_\lambda(u_n)&\leq {\mathcal E}_\lambda(v)+\frac{1}{n}\|v-u_n\| &&\quad\mbox{ for all } v\in {\mathcal N}_\lambda \;\;,n\geq 1.
\end{aligned}
\end{equation*}
Now, one can easily deduce that $(u_n) \in {\mathcal N}_\lambda$ is a $(PS)_{m_\lambda}$ sequence for ${\mathcal E}_\lambda$ on ${\mathcal N}_\lambda$. Further, using Lemma \ref{flg} and Corollary \ref{corr1} we obtain that up to a subsequence $(u_n)$ converges strongly to some $u \in X_{V}^{s}(\R^{N})$ which is a groundstate of ${\mathcal E}_\lambda$.

\section{Proof of Theorem \ref{signc}}

In this section, we discuss the existence of a least energy sign-changing solution of \eqref{fr}. 

\subsection{Proof of Theorem }

\begin{lemma}\label{frl1}
Assume $p>q>2$ and $\lambda \in \R$. Then for any $u \in  X_{V}^{s}(\R^{N})$ and $u^{\pm} \neq 0$ there exists a unique pair $(\tau_0, \theta_0)\in (0, \infty)\times (0, \infty)$ such that $\tau_0 u^{+}+\theta_0 u^{-} \in {\cal M}_\lambda$. Furthermore, if $u\in {\cal M}_\lambda$ then for all $\tau$, $\theta\geq 0$ we have ${\mathcal E}_\lambda(u)\geq {\mathcal E}_\lambda(\tau u^{+}+\theta u^{-})$.
\end{lemma}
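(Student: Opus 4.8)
The plan is to recast the assertion as a two–variable calculus problem for the fibering function
$$
\varphi(\tau,\theta):={\mathcal E}_\lambda(\tau u^{+}+\theta u^{-}),\qquad \tau,\theta\ge 0 .
$$
Since $u^{+}\ge 0\ge u^{-}$ have disjoint supports, for $\tau,\theta>0$ one has $(\tau u^{+}+\theta u^{-})^{+}=\tau u^{+}$ and $(\tau u^{+}+\theta u^{-})^{-}=\theta u^{-}$, and expanding each term of ${\mathcal E}_\lambda$ gives
$$
\varphi(\tau,\theta)=\tfrac12\big(\tau^{2}\|u^{+}\|_{X_{V}^{s}}^{2}+\theta^{2}\|u^{-}\|_{X_{V}^{s}}^{2}+2\tau\theta L\big)-\tfrac{1}{2p}\big(\tau^{2p}a_{1}+\theta^{2p}a_{2}+2\tau^{p}\theta^{p}a_{3}\big)-\tfrac{\lambda}{2q}\big(\tau^{2q}b_{1}+\theta^{2q}b_{2}+2\tau^{q}\theta^{q}b_{3}\big),
$$
where $a_{1},a_{2},a_{3},b_{1},b_{2},b_{3}>0$ are the self– and cross Riesz interaction integrals of $u^{\pm}$, and $L=\langle u^{+},u^{-}\rangle_{X_{V}^{s}}=-\int_{\R^N}\!\!\int_{\R^N}\frac{u^{+}(x)u^{-}(y)+u^{-}(x)u^{+}(y)}{|x-y|^{N+2s}}\,dx\,dy$. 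The first thing I would record is $L>0$: the potential cross term vanishes and the integrand above is $\le 0$, strictly on a set of positive measure since $u^{\pm}$ are nontrivial of opposite sign. The bridge to the statement is the identity $\langle{\mathcal E}_\lambda'(\tau u^{+}+\theta u^{-}),\tau u^{+}\rangle=\tau\,\partial_\tau\varphi(\tau,\theta)$ together with its $\theta$–analogue, so that a pair $(\tau_0,\theta_0)\in(0,\infty)^2$ places $\tau_0u^{+}+\theta_0u^{-}$ on ${\mathcal M}_\lambda$ exactly when it is an interior critical point of $\varphi$.

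For existence I would exhibit an interior maximiser. Because $p>q>2$ we have $2p>2q>2$, so the order–$2p$ self–interaction terms dominate as $\tau^{2}+\theta^{2}\to\infty$ uniformly in direction (the $2p$–homogeneous form is bounded below by a positive constant on the unit quarter–circle since $a_{1},a_{2},a_{3}>0$), whence $\varphi\to-\infty$ at infinity in the closed quadrant. On the axes the lower powers drop out and $\partial_\tau\varphi(0,\theta)=\theta L>0$, $\partial_\theta\varphi(\tau,0)=\tau L>0$, so $\varphi$ increases strictly inward off each open edge; moreover its quadratic part is positive definite (by Cauchy–Schwarz $L<\|u^{+}\|_{X_{V}^{s}}\|u^{-}\|_{X_{V}^{s}}$), so $\varphi>0$ at small interior points and the origin is not maximal. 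Hence the maximum of $\varphi$ over $[0,\infty)^2$ is attained at an interior point $(\tau_0,\theta_0)$ with $\nabla\varphi=0$, the required pair.

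The substantive point is uniqueness, which I would attack through a ray analysis that is robust in $\lambda$. On the ray $\theta=\rho\tau$ one has $\varphi(\tau,\rho\tau)=\tfrac12 Q(\rho)\tau^{2}-\tfrac1{2p}P(\rho)\tau^{2p}-\tfrac{\lambda}{2q}R(\rho)\tau^{2q}$ with $Q,P,R>0$, and the radial derivative vanishes iff $Q(\rho)=P(\rho)\tau^{2p-2}+\lambda R(\rho)\tau^{2q-2}$; as $2p>2q>2$, the right-hand side is strictly increasing (if $\lambda\ge0$) or decreases through a single negative well and then strictly increases (if $\lambda<0$), so it meets the positive constant $Q(\rho)$ exactly once, at some $\tau=T(\rho)$, where
$$
\frac{d^{2}}{d\tau^{2}}\varphi(\tau,\rho\tau)\Big|_{\tau=T(\rho)}=-(2p-2)P(\rho)T(\rho)^{2p-2}-\lambda(2q-2)R(\rho)T(\rho)^{2q-2}<0,
$$
the strict sign holding for every $\lambda\in\R$ because at a crossing on an increasing branch $(2p-2)P T^{2p-2}>-\lambda(2q-2)R T^{2q-2}$. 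Thus on each ray $\varphi$ has a unique, strictly maximal critical point, so its interior critical set is the radial graph $\{(T(\rho),\rho T(\rho)):\rho>0\}$. Setting $M(\rho):=\varphi(T(\rho),\rho T(\rho))$, the envelope relation $M'(\rho)=T(\rho)\,\partial_\theta\varphi(T(\rho),\rho T(\rho))$ shows that a point of this graph is a genuine critical point of $\varphi$ precisely when $M'(\rho)=0$. Hence uniqueness of $(\tau_0,\theta_0)$ is equivalent to $M$ having a single critical point on $(0,\infty)$; since $M$ is smooth, tends to the finite axial maxima as $\rho\to0,\infty$, and has an interior maximum by the existence step, this reduces to the \emph{strict unimodality of $M$}. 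Establishing that monotonicity is the main obstacle, and I expect it to rely precisely on the strict ordering $p>q>2$ (rather than the $q>1$ used for the groundstate) together with the positivity of all the interaction integrals.

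Finally, the ``furthermore'' clause follows at once from uniqueness: if $u\in{\mathcal M}_\lambda$ then $(1,1)$ is an interior critical point of $\varphi$, hence \emph{the} critical point, which coincides with the global maximiser produced above; therefore ${\mathcal E}_\lambda(u)=\varphi(1,1)\ge\varphi(\tau,\theta)={\mathcal E}_\lambda(\tau u^{+}+\theta u^{-})$ for all $\tau,\theta\ge 0$.
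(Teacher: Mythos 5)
Your existence half is essentially correct: the expansion of $\varphi(\tau,\theta)=\mathcal{E}_\lambda(\tau u^{+}+\theta u^{-})$, the strict positivity of the fractional cross term $L$ (which is special to the nonlocal operator and is what pushes the maximum off the axes), the bridge identity $\langle \mathcal{E}_\lambda'(\tau u^{+}+\theta u^{-}),\tau u^{+}\rangle=\tau\,\partial_\tau\varphi(\tau,\theta)$, and the coercivity coming from the $2p$-homogeneous part all hold, and they do produce an interior maximizer, hence some pair $(\tau_0,\theta_0)$ with $\tau_0u^{+}+\theta_0u^{-}\in\mathcal{M}_\lambda$. The genuine gap is uniqueness, which is the actual content of the lemma. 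Your ray analysis is a reformulation, not a proof: showing that each ray carries a unique radial maximum $T(\rho)$ and that the critical points of $\varphi$ are exactly the points $(T(\rho),\rho T(\rho))$ with $M'(\rho)=0$ trades ``$\varphi$ has a unique interior critical point'' for ``$M$ has a unique critical point,'' and the latter is precisely as hard as the former; you then state explicitly that establishing it is ``the main obstacle'' and leave it unproved. Note also that the ``furthermore'' clause collapses with it: if $u\in\mathcal{M}_\lambda$, then $(1,1)$ is \emph{a} critical point of $\varphi$, but without uniqueness you cannot identify it with the global maximizer, so the inequality $\mathcal{E}_\lambda(u)\ge\mathcal{E}_\lambda(\tau u^{+}+\theta u^{-})$ does not follow. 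Thus both assertions of the lemma remain open in your write-up.

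For comparison, the paper attacks uniqueness not by a ray analysis but by the change of variables $(\tau,\theta)\mapsto(\tau^{1/(2p)},\theta^{1/(2p)})$: it studies $\Phi(\tau,\theta)=\mathcal{E}_\lambda(\tau^{1/(2p)}u^{+}+\theta^{1/(2p)}u^{-})$, in which the dominant Riesz self-interaction terms become \emph{linear} in $(\tau,\theta)$ while the norm terms become the concave powers $\tau^{1/p},\theta^{1/p}$; it then asserts that $\Phi$ is strictly concave, so $\Phi$ has at most one maximum point, and the behavior at infinity together with the blow-up of $\partial_\tau\Phi,\partial_\theta\Phi$ at the axes forces exactly one interior maximum --- which delivers existence, uniqueness, and the ``furthermore'' clause in one stroke. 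This substitution is the idea your proposal is missing. Be warned, however, that even this step is delicate, for the reason you yourself identified: after the substitution the cross term $-\frac{(\tau\theta)^{1/2}}{2p}a_4$ (and, when $\lambda>0$, the $\beta$-cross terms $-\lambda\frac{(\tau\theta)^{q/(2p)}}{2q}b_4$, etc.) are \emph{convex}, so strict concavity of $\Phi$ is not a term-by-term triviality; indeed the $\tau\tau$-entry of the Hessian becomes positive when $\theta/\tau$ is large, so the concavity claim requires justification beyond what the paper records. Your diagnosis of where the difficulty sits is therefore accurate, but diagnosing the obstacle is not overcoming it, and as submitted the proposal does not prove the lemma.
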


\begin{proof} We shall follow an idea developed in \cite{VX2017}. Denote
\begin{equation*}
\begin{aligned}
a_1&= ||u^{+}||_{X_{V}^{s}}^{2},&&\quad\mbox{  } b_1= ||u^{-}||_{X_{V}^{s}}^{2},\\
a_2&= \int_{\R^{N}}(I_{\alpha}*|u^{+}|^{p})|u^{+}|^{p},&&\quad\mbox{  } b_2= \int_{\R^{N}}(I_{\beta}*|u^{+}|^{q})|u^{+}|^{q}, \\
a_3&= \int_{\R^{N}}(I_{\alpha}*|u^{-}|^{p})|u^{-}|^{p},&&\quad\mbox{  } b_3= \int_{\R^{N}}(I_{\beta}*|u^{-}|^{q})|u^{-}|^{q}, \\
a_4&= \int_{\R^{N}}(I_{\alpha}*|u^{+}|^{p})|u^{-}|^{p},&&\quad\mbox{  } b_4= \int_{\R^{N}}(I_{\beta}*|u^{+}|^{q})|u^{-}|^{q},\\
A&= \int_{{\mathbb R}^N}\!\!\!\int_{{\mathbb R}^N}\frac{u^{+}(x)u^{-}(y)+u^{-}(x)u^{+}(y)}{|x-y|^{N+2s}}dxdy.
\end{aligned}
\end{equation*}

Let us define the function $ \Phi: [0, \infty)\times [0, \infty)\rightarrow \R$ by
\begin{equation*}
\begin{aligned}
\Phi(\tau, \theta)&= {\mathcal E}_\lambda(\tau^{\frac{1}{2p}} u^{+}+\theta^{\frac{1}{2p}} u^{-})\\
&= \frac{\tau^{\frac{1}{p}}}{2}a_1+\frac{\theta^{\frac{1}{p}}}{2}b_1-\lambda\frac{\tau^{\frac{q}{p}}}{2q}b_2 -\lambda\frac{\theta^{\frac{q}{p}}}{2q}b_3-\lambda\frac{\tau^{\frac{q}{2p}}\theta^{\frac{q}{2p}}}{2q}b_4-\frac{\tau}{2p}a_2 -\frac{\theta}{2p}a_3-\frac{\tau^{\frac{1}{2}}\theta^{\frac{1}{2}}}{2p}a_4-\tau^{\frac{1}{2p}}\theta^{\frac{1}{2p}}A.
\end{aligned}
\end{equation*}
Note that $\Phi$ is strictly concave. Therefore $\Phi$ has at most one maximum point. Also
\begin{equation}\label{fr2}
\lim_{\tau \rightarrow \infty}\Phi(\tau, \theta)= -\infty \mbox{ for all }\theta \geq 0 \quad\mbox{ and } \quad\mbox{ } \lim_{\theta \rightarrow \infty}\Phi(\tau, \theta)= -\infty \mbox{ for all }\tau \geq 0,
\end{equation}
and it is easy to check that 
\begin{equation}\label{fr3}
\lim_{\tau \searrow 0}\frac{\partial{\Phi}}{\partial{\tau}}(\tau, \theta)= \infty \mbox{ for all }\theta> 0 \quad\mbox{ and }  \lim_{\theta \searrow 0}\frac{\partial{\Phi}}{\partial{\theta}}(\tau, \theta)= \infty \mbox{ for all }\tau> 0.
\end{equation}
Hence, \eqref{fr2} and \eqref{fr3} rule out the possibility of achieving a maximum at the boundary. Therefore $\Phi$ has exactly one maximum point $(\tau_0, \theta_0)\in (0, \infty)\times (0, \infty)$.
\end{proof}

\begin{lemma}\label{frl2}
The energy level $c_\lambda>0$ is achieved by some $v\in {\cal M}_{\lambda}$.
\end{lemma}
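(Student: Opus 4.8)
The plan is to obtain $v$ as the strong limit of a minimizing sequence, exploiting the compact embedding granted by $(V1)$–$(V2)$ to pass to the limit in the nonlocal terms, and then to use the fibering result of Lemma~\ref{frl1} to project the limit back onto ${\cal M}_\lambda$. First I would fix $(u_n)\subset{\cal M}_\lambda$ with ${\mathcal E}_\lambda(u_n)\to c_\lambda$. Since ${\cal M}_\lambda\subset{\mathcal N}_\lambda$ (both $\langle{\mathcal E}_\lambda'(u),u^{+}\rangle$ and $\langle{\mathcal E}_\lambda'(u),u^{-}\rangle$ vanish, hence so does $\langle{\mathcal E}_\lambda'(u),u\rangle$), one has
\begin{equation*}
{\mathcal E}_\lambda(u_n)={\mathcal E}_\lambda(u_n)-\frac{1}{2q}\langle{\mathcal E}_\lambda'(u_n),u_n\rangle=\Big(\frac12-\frac1{2q}\Big)\|u_n\|_{X_{V}^{s}}^2+\Big(\frac1{2q}-\frac1{2p}\Big)\int_{\R^{N}}(I_\alpha*|u_n|^p)|u_n|^p.
\end{equation*}
The crucial point is that subtracting $\frac{1}{2q}\langle{\mathcal E}_\lambda'(u_n),u_n\rangle$ cancels the $\beta$-term, leaving both coefficients positive (here $p>q>2$), so ${\mathcal E}_\lambda(u_n)\ge(\frac12-\frac1{2q})\|u_n\|_{X_{V}^{s}}^2$ and $(u_n)$ is bounded for \emph{every} $\lambda\in\R$. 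Note that \eqref{p}, \eqref{q} together with $(N-4s)_{+}<\alpha,\beta$ guarantee $\tfrac{2Np}{N+\alpha},\tfrac{2Nq}{N+\beta}\in(2,2_{s}^{*})$, which is exactly the compatibility making $p,q>2$ admissible.

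Next I would extract, via the compact embedding $X_{V}^{s}(\R^{N})\hookrightarrow L^{r}(\R^{N})$ for $r\in[2,2_{s}^{*})$, a subsequence with $u_n\rightharpoonup v$ weakly in $X_{V}^{s}$, $u_n\to v$ strongly in each such $L^{r}$ and a.e.; consequently $u_n^{\pm}\rightharpoonup v^{\pm}$ weakly in $X_{V}^{s}$, $u_n^{\pm}\to v^{\pm}$ strongly in $L^{r}$ and a.e. By continuity of the Nemytskii map and the Hardy--Littlewood--Sobolev inequality \eqref{hli}, every nonlocal term, including the mixed ones $\int(I_\alpha*(u_n^{\mp})^p)(u_n^{\pm})^p$ and their $\beta$-analogues, converges to the corresponding quantity built from $v^{\pm}$. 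To see $v^{\pm}\neq0$ I would use that the mixed Gagliardo term in $\langle{\mathcal E}_\lambda'(u_n),u_n^{+}\rangle$ is nonnegative, so the constraint forces $\|u_n^{+}\|_{X_{V}^{s}}^2\le\int_{\R^{N}}(I_\alpha*|u_n|^p)(u_n^{+})^p+\lambda\int_{\R^{N}}(I_\beta*|u_n|^q)(u_n^{+})^q$. Estimating the right-hand side by \eqref{hli} and Sobolev (discarding the $\beta$-term when $\lambda\le0$) gives $\|u_n^{+}\|_{X_{V}^{s}}^2\le C\|u_n^{+}\|_{X_{V}^{s}}^{p}\big(\|u_n^{+}\|_{X_{V}^{s}}^{p}+\|u_n^{-}\|_{X_{V}^{s}}^{p}\big)+|\lambda|C(\cdots)$; since $\|u_n\|_{X_{V}^{s}}$ is bounded and $p,q>2$, dividing by $\|u_n^{+}\|_{X_{V}^{s}}^2$ yields a uniform lower bound $\|u_n^{\pm}\|_{X_{V}^{s}}\ge c_0>0$. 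Feeding this back shows that the nonlocal integrals stay $\ge c_0^2$, and passing to the limit gives $\int_{\R^{N}}(I_\alpha*|v|^p)(v^{+})^p+\lambda\int_{\R^{N}}(I_\beta*|v|^q)(v^{+})^q>0$, so $v^{+}\neq0$; symmetrically $v^{-}\neq0$.

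The heart of the argument is the projection--squeeze step. Since $v^{\pm}\neq0$, Lemma~\ref{frl1} produces a unique $(\tau_0,\theta_0)\in(0,\infty)\times(0,\infty)$ with $\bar v:=\tau_0v^{+}+\theta_0v^{-}\in{\cal M}_\lambda$, whence $c_\lambda\le{\mathcal E}_\lambda(\bar v)$. For the reverse inequality I would set $w_n=\tau_0u_n^{+}+\theta_0u_n^{-}\rightharpoonup\bar v$ and combine weak lower semicontinuity of $\|\cdot\|_{X_{V}^{s}}^2$ with the strong convergence of the nonlocal terms (so that the sum, with the quadratic part lower semicontinuous and the negatively signed nonlocal parts continuous, is lower semicontinuous along $w_n$):
\begin{equation*}
{\mathcal E}_\lambda(\bar v)\le\liminf_{n\to\infty}{\mathcal E}_\lambda(w_n)=\liminf_{n\to\infty}{\mathcal E}_\lambda\big(\tau_0u_n^{+}+\theta_0u_n^{-}\big)\le\liminf_{n\to\infty}{\mathcal E}_\lambda(u_n)=c_\lambda,
\end{equation*}
where the last inequality is the maximality property of Lemma~\ref{frl1} applied to $u_n\in{\cal M}_\lambda$. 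Thus ${\mathcal E}_\lambda(\bar v)=c_\lambda$ and $\bar v\in{\cal M}_\lambda$ realizes the infimum, and $c_\lambda=(\frac12-\frac1{2q})\|\bar v\|_{X_{V}^{s}}^2+(\frac1{2q}-\frac1{2p})\int(I_\alpha*|\bar v|^p)|\bar v|^p>0$ because $\bar v\neq0$.

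The main obstacle I anticipate is the step $v^{\pm}\neq0$ together with the sign bookkeeping in the squeeze: one must ensure the uniform lower bounds $\|u_n^{\pm}\|_{X_{V}^{s}}\ge c_0$ survive for arbitrary sign of $\lambda$, and that in the chain above the quadratic term is treated by lower semicontinuity while the nonlocal terms (entering with negative coefficients) are treated by strong $L^{r}$ convergence, so that no wrong-way inequality is introduced. The compactness of the embedding, hence assumption $(V2)$, is what makes the nonlocal terms genuinely continuous along the sequence and is indispensable here.
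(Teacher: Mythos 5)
Your proof is correct and follows essentially the same route as the paper's: boundedness of the minimizing sequence via ${\mathcal E}_\lambda-\frac{1}{2q}\langle{\mathcal E}_\lambda'(u_n),u_n\rangle$, the compact embedding $X_{V}^{s}(\R^{N})\hookrightarrow L^{r}(\R^{N})$ to pass to the limit in all nonlocal terms, a constraint-plus-Hardy--Littlewood--Sobolev estimate to guarantee $v^{\pm}\neq 0$, and then the projection and maximality properties of Lemma~\ref{frl1} combined with weak lower semicontinuity of the norm for the final squeeze $c_\lambda\le{\mathcal E}_\lambda(\tau_0 v^{+}+\theta_0 v^{-})\le\liminf_n{\mathcal E}_\lambda(\tau_0 u_n^{+}+\theta_0 u_n^{-})\le\lim_n{\mathcal E}_\lambda(u_n)=c_\lambda$. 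The only (harmless) deviations are that you derive a uniform lower bound on $\|u_n^{\pm}\|_{X_{V}^{s}}$ and feed it back into the nonlocal integrals, whereas the paper bounds the $L^{\frac{2Np}{N+\alpha}}$ and $L^{\frac{2Nq}{N+\beta}}$ norms of $u_n^{\pm}$ from below directly, and your bookkeeping of the sign of $\lambda$ and of the mixed Gagliardo term is more explicit than the paper's.
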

\begin{proof}
Let $(u_n)\subset {\cal M}_{\lambda}$ be a minimizing sequence for $c_\lambda$. Note that
\begin{equation*}
\begin{aligned}
{\mathcal E}_\lambda(u_{n})&= {\mathcal E}_\lambda(u_{n})-\frac{1}{2q}\langle {\mathcal E}_\lambda'(u_{n}), u_{n}\rangle \\
&= \Big(\frac{1}{2}-\frac{1}{2q}\Big)\|u_{n}\|_{X_{V}^{s}}^{2}+\Big(\frac{1}{2q}-\frac{1}{2p}\Big)\int_{\R^{N}}(I_{\alpha}*|u|^{q})|u|^{q}\\
&\geq \Big(\frac{1}{2}-\frac{1}{2q}\Big)\|u_{n}\|_{X_{V}^{s}}^{2}\\
&\geq C_{1}\|u_{n}\|_{X_{V}^{s}}^{2},
\end{aligned}
\end{equation*}
where $C_{1}>0$ is a positive constant. Therefore, for some constant $C_{2}> 0$ we have 
$$
\|u_{n}\|_{X_{V}^{s}}^{2}\leq C_{2}{\mathcal E}_{\lambda}(u_{n})\leq M,
$$
which implies $(u_n)$ is bounded in $X^{s}_{V}(\R^{N})$. So, $(u_{n}^{+})$ and $(u_{n}^{-})$ are also bounded in $X^{s}_{V}(\R^{N})$ and passing to a subsequence, there exists $u^{+}$, $u^{-}\in H^{s}(\R^{N})$ such that
$$
u_{n}^{+}\rightharpoonup u^{+} \mbox{ and } u_{n}^{-}\rightharpoonup u^{-} \quad\mbox{ weakly in } X^{s}_{V}(\R^{N}).
$$
Since $p$, $q>2$ satisfy \eqref{p} and \eqref{q} we deduce that the embeddings $X^{s}_{V}(\R^{N})\hookrightarrow L^{\frac{2Np}{N+\alpha}}(\R^{N})$ and  $X^{s}_{V}(\R^{N})\hookrightarrow L^{\frac{2Nq}{N+\beta}}(\R^{N})$ are compact. Thus,
\begin{equation}\label{m1}
u_{n}^{\pm} \rightarrow u^{\pm} \quad\mbox{ strongly in } L^{\frac{2Np}{N+\alpha}}(\R^{N}) \cap L^{\frac{2Nq}{N+\beta}}(\R^{N}).
\end{equation}
Moreover, by Hardy-Littlewood-Sobolev inequality we estimate
\begin{equation*}
\begin{aligned}
C\Big(\|u_{n}^{\pm}\|_{L^{\frac{2Np}{N+\alpha}}}^{2}+\|u_{n}^{\pm}\|_{L^{\frac{2Nq}{N+\beta}}}^{2}\Big)\leq \|u_{n}^{\pm}\|_{X_{V}^{s}}^{2}&= \int_{\R^{N}}(I_{\alpha}*|u_n|^{p})|u_{n}^{\pm}|^{p}+|\lambda|\int_{\R^{N}}(I_{\beta}*|u_n|^{q})|u_{n}^{\pm}|^{q}\\
&\leq
C\Big(\|u_{n}^{\pm}\|_{L^{\frac{2Np}{N+\alpha}}}^{p}+\|u_{n}^{\pm}\|_{L^{\frac{2Np}{N+\alpha}}}^{p}\Big)\\
&\leq
C\Big(\|u_{n}^{\pm}\|_{L^{\frac{2Np}{N+\alpha}}}^{2}+\|u_{n}^{\pm}\|_{L^{\frac{2Nq}{N+\beta}}}^{2}\Big)\Big(\|u_{n}^{\pm}\|_{L^{\frac{2Np}{N+\alpha}}}^{p-2}+||u_{n}^{\pm}||_{L^{\frac{2Nq}{N+\beta}}}^{q-2}\Big).
\end{aligned}
\end{equation*}
Since $u_{n}^{\pm}\neq 0$, we deduce 
\begin{equation}\label{m2}
\|u_{n}^{\pm}\|_{L^{\frac{2Np}{N+\alpha}}}^{p-2}+\|u_{n}^{\pm}\|_{L^{\frac{2Nq}{N+\beta}}}^{q-2}\geq C> 0 \quad\mbox{ for all } n\geq 1.
\end{equation}
Hence, by \eqref{m1} and \eqref{m2} it follows that $u^{\pm} \neq 0$.
Further using \eqref{m1} and Hardy-Littlewood-Sobolev inequality, we have
\begin{equation*}
\begin{aligned}
&\int_{\R^{N}}(I_{\alpha}*|u_{n}^{\pm}|^{p})|u_{n}^{\pm}|^{p} &&\rightarrow \int_{\R^{N}}(I_{\alpha}*|u^{\pm}|^{p})|u^{\pm}|^{p},\\
&\int_{\R^{N}}(I_{\alpha}*|u_{n}^{+}|^{p})|u_{n}^{-}|^{p} &&\rightarrow \int_{\R^{N}}(I_{\alpha}*|u^{+}|^{p})|u^{-}|^{p},\\
&\int_{\R^{N}}(I_{\beta}*|u_{n}^{\pm}|^{q})|u_{n}^{\pm}|^{q} &&\rightarrow \int_{\R^{N}}(I_{\beta}*|u^{\pm}|^{q})|u^{\pm}|^{q},
\end{aligned}
\end{equation*}
and
\begin{equation*}
\int_{\R^{N}}(I_{\beta}*|u_{n}^{+}|^{q})|u_{n}^{-}|^{q} \rightarrow \int_{\R^{N}}(I_{\beta}*|u^{+}|^{q})|u^{-}|^{q}.
\end{equation*}
By Lemma \ref{frl1}, there exists a unique pair $(\tau_{0}, \theta_{0})$ such that $\tau_{0} u^{+}+\theta_{0} u^{-}\in {\cal M}_{\lambda}$. By the weakly lower semi-continuity of the norm $\|.\|_{X^{s}_V}$, we deduce that
\begin{equation*}
\begin{aligned}
c_\lambda \leq {\mathcal E}_\lambda(\tau_{0} u^{+}+\theta_{0} u^{-})&\leq \liminf_{n\rightarrow \infty} {\mathcal E}_\lambda(\tau_{0} u^{+}+\theta_{0} u^{-})\\
&\leq \limsup_{n\rightarrow \infty} {\mathcal E}_\lambda(\tau_{0} u^{+}+\theta_{0} u^{-})\\
&\leq \lim_{n\rightarrow \infty}{\mathcal E}_\lambda(u_{n})\\
&= c_\lambda.
\end{aligned}
\end{equation*}
Letting now $v= \tau_{0} u^{+}+\theta_{0} u^{-}\in {\cal M}_\lambda$, we finish the proof.
\end{proof}

\begin{lemma}\label{frl3}
$v= \tau_{0} u^{+}+\theta_{0} u^{-}\in {\cal M}_\lambda$ is a critical point of ${\mathcal E}_\lambda:X^{s}_{V}(\R^{N}) \rightarrow \R$, that is 
$$
{\mathcal E}_\lambda'(v)=0.
$$
\end{lemma}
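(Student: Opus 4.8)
The plan is to argue by contradiction, combining a quantitative deformation of $\mathcal{E}_\lambda$ near $v$ with a planar degree argument on the two-parameter family $g(\tau,\theta):=\tau v^{+}+\theta v^{-}$, $(\tau,\theta)\in(0,\infty)^2$. Suppose $\mathcal{E}_\lambda'(v)\neq 0$. By continuity of $\mathcal{E}_\lambda'$ there exist $\mu>0$ and $\delta>0$ with $\|\mathcal{E}_\lambda'(w)\|\geq\mu$ whenever $\|w-v\|_{X_{V}^{s}}\leq 3\delta$. Since $v\in{\cal M}_\lambda$, applying Lemma \ref{frl1} to $v$ forces its projection pair to be $(1,1)$, and the strict concavity of $\Phi$ established there shows that $(1,1)$ is the \emph{unique} and strict maximum of $(\tau,\theta)\mapsto\mathcal{E}_\lambda(g(\tau,\theta))$. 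Hence, choosing a small square $D=[1-r,1+r]^2$ around $(1,1)$ with $r$ so small that $\|g(\tau,\theta)-v\|_{X_{V}^{s}}<\delta$ on $D$, we obtain
\[
\bar\beta:=\max_{(\tau,\theta)\in\partial D}\mathcal{E}_\lambda(g(\tau,\theta))<c_\lambda=\mathcal{E}_\lambda(v).
\]

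Next I would invoke the quantitative deformation lemma with the set $B(v,\delta)$ and a level $\varepsilon>0$ chosen small enough that $2\varepsilon<c_\lambda-\bar\beta$ and $\varepsilon<\mu\delta/8$, obtaining $\eta\in C(X_{V}^{s}(\R^{N}),X_{V}^{s}(\R^{N}))$ such that $\eta(w)=w$ when $\mathcal{E}_\lambda(w)\leq c_\lambda-2\varepsilon$, $\mathcal{E}_\lambda(\eta(w))\leq\mathcal{E}_\lambda(w)$ for every $w$, $\|\eta(w)-w\|_{X_{V}^{s}}\leq\delta$, and $\mathcal{E}_\lambda(\eta(w))\leq c_\lambda-\varepsilon$ whenever $w\in B(v,\delta)$ with $\mathcal{E}_\lambda(w)\leq c_\lambda+\varepsilon$. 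Setting $h:=\eta\circ g$, these properties together with $\mathcal{E}_\lambda(g(\tau,\theta))\leq c_\lambda$ on $D$ give $\max_{(\tau,\theta)\in D}\mathcal{E}_\lambda(h(\tau,\theta))<c_\lambda$; moreover, since $\mathcal{E}_\lambda(g(\tau,\theta))\leq\bar\beta\leq c_\lambda-2\varepsilon$ on $\partial D$, we have $h=g$ on $\partial D$.

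The decisive step is to show that $h(D)$ still meets ${\cal M}_\lambda$, which would exhibit an element of ${\cal M}_\lambda$ with energy strictly below $c_\lambda$ and contradict $c_\lambda=\inf_{{\cal M}_\lambda}\mathcal{E}_\lambda$. To this end I introduce the planar field
\[
\Psi(\tau,\theta)=\Big(\langle\mathcal{E}_\lambda'(g(\tau,\theta)),\tau v^{+}\rangle,\ \langle\mathcal{E}_\lambda'(g(\tau,\theta)),\theta v^{-}\rangle\Big),
\]
using that $g(\tau,\theta)^{+}=\tau v^{+}$ and $g(\tau,\theta)^{-}=\theta v^{-}$ for $\tau,\theta>0$. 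Since its two components equal $\tau,\theta$ times the partial derivatives of $\mathcal{E}_\lambda\circ g$, and $(1,1)$ is the unique critical point of $\mathcal{E}_\lambda\circ g$ in $\bar D$ (a strict maximum), $\Psi$ does not vanish on $\partial D$ and $\deg(\Psi,D,0)\neq 0$. The analogous field built from $h$, namely $\big(\langle\mathcal{E}_\lambda'(h),h^{+}\rangle,\langle\mathcal{E}_\lambda'(h),h^{-}\rangle\big)$, is continuous and agrees with $\Psi$ on $\partial D$ because $h=g$ there; hence it has the same nonzero degree and therefore a zero $(\tau_*,\theta_*)\in D$, at which $\langle\mathcal{E}_\lambda'(h(\tau_*,\theta_*)),h(\tau_*,\theta_*)^{\pm}\rangle=0$.

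The main obstacle is to certify that this zero genuinely lies in ${\cal M}_\lambda$, i.e. that $h(\tau_*,\theta_*)^{\pm}\neq 0$; only then does the strict inequality $\mathcal{E}_\lambda(h(\tau_*,\theta_*))<c_\lambda$ produce the contradiction forcing $\mathcal{E}_\lambda'(v)=0$. I would secure this by exploiting the smallness $\|\eta(w)-w\|_{X_{V}^{s}}\leq\delta$: on the compact family $g(\bar D)$ the signed parts $g(\tau,\theta)^{\pm}=\tau v^{+},\theta v^{-}$ are uniformly bounded away from zero in $X_{V}^{s}(\R^{N})$, and the truncation map $w\mapsto w^{\pm}$ is continuous, so shrinking $\delta$ keeps $h^{\pm}$ from vanishing; a uniform lower bound on $\|w^{\pm}\|_{X_{V}^{s}}$ of the type \eqref{m2} then rules out degeneration of the signed parts along the deformation.
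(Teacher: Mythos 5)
Your proposal is correct, and at the level of strategy it is the same argument as the paper's: assume ${\mathcal E}_\lambda'(v)\neq 0$, run a planar Brouwer degree argument on the two\-/parameter family $\tau u^{+}+\theta u^{-}$ to show that a deformed copy of this family, which agrees with the original one on $\partial D$ but has energy pushed strictly below $c_\lambda$, still meets ${\cal M}_\lambda$, and contradict the definition of $c_\lambda$. The difference is the deformation device. The paper builds it by hand: it picks $\varphi\in C_c^{\infty}(\R^N)$ with $\langle {\mathcal E}_\lambda'(v),\varphi\rangle=-2$, perturbs the family along this single fixed direction, $S(\tau,\theta)=\tau u^{+}+\theta u^{-}+r\psi(\tau,\theta)\varphi$ with a cutoff $\psi$ vanishing near $\partial D$, and gets the energy drop exactly, ${\mathcal E}_\lambda(S(\tau_1,\theta_1))={\mathcal E}_\lambda(\tau_1 u^{+}+\theta_1 u^{-})-r\psi(\tau_1,\theta_1)$, by integrating the derivative bound along the segment; you instead invoke the abstract quantitative deformation lemma and set $h=\eta\circ g$. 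What the paper's version buys is self-containedness and an explicit, quantitative energy decrease; what yours buys is a shorter argument once the standard lemma is granted, and your constant bookkeeping ($\mu$, $\delta$, $\varepsilon<\mu\delta/8$, $2\varepsilon<c_\lambda-\bar\beta$, with $\delta\to r\to\bar\beta\to\varepsilon$) is consistent and non-circular. Two remarks. First, your justification that $\deg(\Psi,D,0)\neq 0$ (``unique critical point which is a strict maximum'') is terse; in general an isolated critical point does not force nonzero degree, and the clean route here is the strict concavity of the function $\Phi$ from the proof of Lemma \ref{frl1}: concavity gives $\nabla\Phi(z)\cdot(z_0-z)\geq \Phi(z_0)-\Phi(z)>0$ for $z\neq z_0$, so $\nabla\Phi$ is straight-line homotopic to $z\mapsto z_0-z$ on small circles around the maximizer and the degree equals $1$; positive rescaling of the components and the reparametrization then carry this over to your $\Psi$ (the paper is equally terse, asserting $\deg(L,{\rm int}(D),(0,0))=1$ without proof). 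Second, your ``decisive step'' --- verifying that the zero produced by the degree argument has nontrivial positive and negative parts, so that it genuinely lies in ${\cal M}_\lambda$ --- addresses a point the paper silently skips: there, $S(\tau_1,\theta_1)\in{\cal M}_\lambda$ is concluded from $L(\tau_1,\theta_1)=(0,0)$ without checking $S(\tau_1,\theta_1)^{\pm}\neq 0$. Your continuity-plus-compactness argument (or, more simply, continuity of $w\mapsto w^{\pm}$ at $v$ combined with $\|h-v\|_{X_V^s}\leq 2\delta$ once $r$ is chosen so that $g(\overline D)\subset B(v,\delta)$) closes this gap, and the same observation applies verbatim to the paper's explicit perturbation for $r$ small.
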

\begin{proof}
Assume by contradiction that $v$ is not a critical point of ${\mathcal E}_{\lambda}$. Then there exists $\varphi\in C_{c}^{\infty}(\R^N)$ such that
$$
\langle {\mathcal E}_\lambda'(v), \varphi \rangle= -2.
$$
Since ${\mathcal E}_{\lambda}$ is continuous and differentiable, there exists $r>0$ small such that
\begin{equation}\label{fr4}
\langle {\mathcal E}_\lambda'(\tau u^{+}+\theta u^{-}+\varepsilon \bar{v}), \bar{v} \rangle \;\; \leq -1 \quad\mbox{ if } (\tau- \tau_{0})^{2}+(\theta- \theta_0)^{2}\leq r^{2} \mbox{ and } 0\leq \varepsilon \leq r.
\end{equation}
Let $D$ be the open disc in $\R^{2}$ of radius $r>0$ centered at $(\tau_0, \theta_0)$. We define a continuous function $\psi: D\rightarrow [0, 1]$ as 
\begin{equation*}
\psi(\tau, \theta)= \left\{\begin{array}{cc}1\quad\mbox{ if }(\tau- \tau_{0})^{2}+(\theta- \theta_0)^{2}\leq \frac{r^{2}}{16}, \\ 0\quad\mbox{ if }(\tau- \tau_{0})^{2}+(\theta- \theta_0)^{2}\geq \frac{r^{2}}{4}.\end{array}
\right.
\end{equation*}
Further, we define a continuous map $S: D\rightarrow X^{s}_{V}(\R^{N})$ as 
\begin{equation*}
S(\tau, \theta)= \tau u^{+}+\theta u^{-}+r\psi(\tau, \theta)\bar{v} \quad\mbox{ for all } (\tau, \theta)\in D
\end{equation*}
and $L: D\rightarrow \R^{2}$ as
\begin{equation*}
L(\tau, \theta)= (\langle {\mathcal E}_\lambda'(S(\tau, \theta)), S(\tau, \theta)^{+}\rangle, \langle {\mathcal E}_\lambda'(S(\tau, \theta)), S(\tau, \theta)^{-}\rangle) \quad\mbox{ for all }(\tau, \theta)\in D. 
\end{equation*}
Since the mapping $u \mapsto u^{+}$ is continuous in $X^{s}_{V}(\R^{N})$, it follows that $L$ is continuous. If $(\tau- \tau_{0})^{2}+(\theta- \theta_0)^{2}= r^{2}$, that is, if we are on the boundary of $D$ then $\psi= 0$ by definition. Then $S(\tau, \theta)= \tau u^{+}+\theta u^{-}$ and using Lemma \ref{frl1}, we get  
\begin{equation*}
L(\tau, \theta)= (\langle {\mathcal E}_\lambda'(\tau u^{+}+\theta u^{-}), (\tau u^{+}+\theta u^{-})^{+}\rangle, \langle {\mathcal E}_\lambda'(\tau u^{+}+\theta u^{-}), (\tau u^{+}+\theta u^{-})^{-} \rangle)\neq 0 \quad\mbox{ on } {\partial D}. 
\end{equation*}
Therefore, the Brouwer degree is well-defined and $\deg(L, {\rm int} (D), (0, 0))=1$. Then, there exists $(\tau_1, \theta_1)\in {\rm int} (D)$ such that $L(\tau_1, \theta_1)= (0, 0)$. Thus, $S(\tau_1, \theta_1)\in {\cal M}_{\lambda}$ and using the definition of $c_\lambda$ we get
\begin{equation}\label{fr5}
{\mathcal E}_\lambda(S(\tau_1, \theta_1))\geq c_\lambda.
\end{equation}
Using equation \eqref{fr4}, we deduce that
\begin{equation}\label{fr6}
\begin{aligned}
{\mathcal E}_\lambda(S(\tau_1, \theta_1))&= {\mathcal E}_\lambda(\tau_1 u^{+}+\theta_{1} u^{-})+\int_{0}^{1}\frac{d}{dt}{\mathcal E}_\lambda(\tau_1 u^{+}+\theta_{1} u^{-}+rt \psi(\tau_1, \theta_1)\bar{v})dt \\
&=  {\mathcal E}_\lambda(\tau_1 u^{+}+\theta_{1} u^{-})+\int_{0}^{1}\langle {\mathcal E}_\lambda'(\tau_1 u^{+}+\theta_{1} u^{-}+rt \psi(\tau_1, \theta_1)\bar{v}),r\psi(\tau_1, \theta_1)\bar{v} \rangle dt \\
&= {\mathcal E}_\lambda(\tau_1 u^{+}+\theta_{1} u^{-})-r\psi(\tau_1, \theta_1).
\end{aligned}
\end{equation}
If $(\tau_1, \theta_1)=(\tau_0, \theta_0)$, then $\psi(\tau_1, \theta_1)=1$ by definition and we deduce that
$$
{\mathcal E}_\lambda(S(\tau_1, \theta_1))\leq {\mathcal E}_\lambda(\tau_1 u^{+}+\theta_{1} u^{-})-r\leq c_\lambda-r< c_\lambda,
$$
and if  $(\tau_1, \theta_1)\neq (\tau_0, \theta_0)$, then using Lemma \ref{frl1} we have 
$$
{\mathcal E}_\lambda(\tau_1 u^{+}+\theta_{1} u^{-})< {\mathcal E}_\lambda(\tau_0 u^{+}+\theta_{0} u^{-})= c_\lambda,
$$
which yields
$$
{\mathcal E}_\lambda(S(\tau_1, \theta_1))\leq {\mathcal E}_\lambda(\tau_1 u^{+}+\theta_{1} u^{-})< c_\lambda,
$$
which is a contradiction to equation \eqref{fr5}. 
\end{proof}

\end{document}